\documentclass[12pt, a4paper]{article}

\usepackage[utf8]{inputenc}
\usepackage{amsmath}
\usepackage{amssymb}
\usepackage{amsthm}
\usepackage{geometry}
\usepackage{authblk} 

\newtheorem{theorem}{Theorem}[section]
\newtheorem{proposition}[theorem]{Proposition}
\newtheorem{corollary}[theorem]{Corollary}

\newcommand{\firstpage}[1]{\setcounter{page}{#1}}

\begin{document}

\title{On Mersenne-Bernoulli and Mersenne-Euler polynomials}

\author{Artatrana Suna and Prasanta Kumar Ray$^{}$%
  \footnote{Corresponding author}\\
  \small Department of Mathematics, Sambalpur University,
  \small Jyoti Vihar, Burla, Odisha 768019, India\\
  \small\texttt{suna.19972702@gmail.com},\quad
  \texttt{prasantamath@suniv.ac.in}}

\firstpage{1}

\maketitle

\begin{abstract}
The sequence of Mersenne numbers $\{M_n\}_{n\geq 0}$ is defined as $M_n = 2^n-1.$ In this study we introduce the Mersenne-Bernoulli and Mersenne-Euler polynomials. Using the generating functions and $M$-calculus we find some identities associated with them. Moreover, we define the corresponding matrices with these polynomials, factorise them and find their inverses.

\vspace{1em}
\noindent \textbf{Keywords:} Mersenne numbers, Mersenne-Bernoulli polynomials, Mersenne-Euler polynomials, Generating functions, M-calculus, Pascal matrix.

\noindent \textbf{2020 Mathematics Subject Classification:} 11B68, 11B83, 05A15.
\end{abstract}


\section{Introduction}
Special polynomials have garnered significant attention due to their remarkable versatility in bridging pure mathematics with applied disciplines. Their structured recurrence relations, orthogonality properties, and elegant generating functions make them powerful tools for solving complex problems in areas such as combinatorics, number theory and mathematical analysis. Recent years have seen surge in exploration of special polynomials in degenerate , probabilistic and hybrid forms \cite{KK1,KK2,KK3,KK4,RB,RC,RK,KK5,ZW}.
Among all special polynomials, the Bernoulli and the Euler polynomials play a central role in mathematics. The sequence of Bernoulli polynomials $\{B_n(x)\}_{n \geq 0}$ and the Euler polynomials $\{E_n(x)\}_{n \geq 0}$ \cite{AH} are defined by their exponential generating functions as
\begin{equation}
\frac{te^{tx}}{e^t-1} = \sum_{n=0}^{\infty} B_n(x) \frac{t^n}{n!}, \hspace{2em}|t| < 2\pi
\end{equation}
and
\begin{equation}
\frac{2e^{tx}}{e^t+1} = \sum_{n=0}^{\infty} E_n(x) \frac{t^n}{n!}, \hspace{2em}|t| < \pi.
\end{equation}
First few terms of these polynomials are 
\[B_0(x) = 1, ~B_1(x) = x-\frac{1}{2}, ~B_2(x) = x^2-x+\frac{1}{6}, ~ B_3(x) = x^3-\frac{3}{2}x^2+\frac{1}{2}, \dots\]
and
\[E_0(x) = 1, ~E_1(x) = x-\frac{1}{2}, ~E_2(x) = x^2-x, ~E_3(x) = x^3-\frac{3}{2}x^2+\frac{1}{4}, \dots\]
They are explicitly expressed as
\begin{equation*}
B_{n}(x) = \sum_{k=0}^{n}\binom{n}{k}B_{k}x^{n-k}, 
\end{equation*}
and 
\begin{equation*}
E_{n}(x) = \sum_{k=0}^{n}\binom{n}{k}E_{k}x^{n-k}, 
\end{equation*}
where $B_k = B_k(0)$ and $E_k = E_k(0)$ are the $k$th Bernoulli and Euler numbers respectively. 
Over the past decades, researchers have worked on  the $F$-analogs of special polynomials and the matrices associated with them \cite{DA,GT,MK,KTK,RUF,TKK,URW}.  Motivated by these works we consider the $M$-calculus consisting of the Mersenne numbers $\{M_n\}_{n\geq0}$( A000043 in the OEIS), which are sequence of integers of the form $M_n = 2^n-1.$ These numbers play a crucial role in the search of large primes. The largest known prime till date is $M_{136279841}.$ The Mersenne numbers satisfy the recursion
\begin{equation*}
M_{n+1} = 2M_{n}+1
\end{equation*}
with initial condition $M_0 = 0.$ It is also a binary linear recurrent sequence with
\begin{equation*}
M_{n+2} = 3M_{n+1}-2M_{n}
\end{equation*}
where $M_0 = 0$ and $M_1 = 1.$ By drawing inspiration from the classical factorial, we define the $M$-factorial by assigning $M_0! = 1$ and $M_n = M_n M_{n-1}!.$  Consequently, we define the $M$-binomial coeffients by 
\[\binom{n}{k}_M = \frac{M_{n}!}{M_k!M_{n-k}!}\]
 for $n \geq k \geq 0$ whereas $\binom{n}{k}_M = 0$ for $k > n.$  It is readily seen that 
 \[\binom{n}{k}_M = \binom{n}{n-k}_M, \binom{n}{0}_M = 1 \text{~and~} \binom{n}{1}_M = M_n.\]
Moreover, $\binom{n}{k}_M$ is always an integer as it satisfies the following recurrence relation
\begin{equation} \label{eq: MB}
\binom{n}{k}_M = 2^k\binom{n-1}{k}_M+\binom{n-1}{k-1}_M.
\end{equation}
Further, if $1 \leq j \leq k \leq i$ we have
\begin{equation} \label{eq: MF1}
\binom{i-1}{k-1}_M\binom{k-1}{j-1}_M = \binom{i-1}{j-1}_M\binom{i-j}{k-j}_M.
\end{equation}
We state the $M$-binomial theorem as 
\[(x+_My)^n = \sum_{k=0}^{n} \binom{n}{k}_M x^k y^{n-k}.\]
It is easily followed that $(x+_My)^n = (y+_Mx)^n,~(x+_My)^0 = 1$ and $(x+_My)^1 = (x+y).$ Using the $M$-factorial we define the $M$-exponential function by 
\begin{equation}
e^{xt}_M = \sum_{n=0}^{\infty}x^n\frac{t^n}{M_n!}.
\end{equation}
It is easily seen that $e^{0}_M = 1$ and $e^{xt}_M e^{yt}_M = e^{(x+_My)t}_M.$ Further, for $A(t) = \sum_{n=0}^{\infty}a_n \frac{t^n}{M_n!}$ and $B(t) = \sum_{n=0}^{\infty}b_n \frac{t^n}{M_n!}$ we have $A(t) = B(t)$ if and only if $a_n = b_n$ for $n \geq 0.$ Moreover,  $A(t)B(t) = C(t),$ where $C(t) = \sum_{n=0}^{\infty}c_n \frac{t^n}{M_n!}$ with $c_n = \sum_{k=0}^{n}\binom{n}{k}_Ma_nb_{n-k}$ for $n \geq 0.$

\section{Mersenne-Bernoulli and Mersenne-Euler Polynomials}
For a non-negative integer $n$ and a complex number $x$ let $B_{n,M}(x)$ and $E_{n,M}(x)$ be the functions defined by the following relations
\begin{equation} \label{egf:1}
\frac{te^{tx}_M}{e^t_M-1} = \sum_{n=0}^{\infty} B_{n,M}(x) \frac{t^n}{M_n!}, \hspace{2em}|t| < \frac{2\pi}{\ln |e_M^1|}
\end{equation}
and
\begin{equation}
\frac{2e^{tx}_M}{e^t_M+1} = \sum_{n=0}^{\infty} E_{n,M}(x) \frac{t^n}{M_n!}, \hspace{2em}|t| < \frac{\pi}{\ln |e_M^1|}.
\end{equation}
The numbers $B_{n,M} = B_{n,M}(0)$ and $E_{n,M} = E_{n,M}(0)$ are called the Mersenne-Bernoulli and Mersenne-Euler numbers respectively.
\begin{proposition}
For any non-negative integer $n$ we have
\begin{equation*}
x^n = \sum_{k=0}^{n}\binom{n}{k}_M\frac{1}{M_{k+1}}B_{n-k,M}(x).
\end{equation*}
\end{proposition}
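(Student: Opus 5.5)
We will argue entirely at the level of generating functions, using the product rule for series in the basis $t^n/M_n!$ recalled in the introduction. The idea is to multiply the defining relation \eqref{egf:1} by the factor $(e^t_M-1)/t$ and write that factor itself as an $M$-exponential series.

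First we compute
\[
\frac{e^t_M-1}{t}=\sum_{n=1}^{\infty}\frac{t^{n-1}}{M_n!}=\sum_{k=0}^{\infty}\frac{t^{k}}{M_{k+1}!}=\sum_{k=0}^{\infty}\frac{1}{M_{k+1}}\,\frac{t^k}{M_k!},
\]
using $M_{k+1}!=M_{k+1}M_k!$. Thus $(e^t_M-1)/t=\sum_{k\ge 0}a_k\,t^k/M_k!$ with $a_k=1/M_{k+1}$. Note that $M_{k+1}=2^{k+1}-1\neq 0$ for $k\ge 0$, so there is no division by zero.

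Next we multiply \eqref{egf:1} by this series. On the left the factors cancel and we get $e^{tx}_M=\sum_{n\ge0}x^n\,t^n/M_n!$. On the right we apply the product formula: the coefficient of $t^n/M_n!$ is
\[
\sum_{k=0}^{n}\binom{n}{k}_M a_k\,B_{n-k,M}(x)=\sum_{k=0}^{n}\binom{n}{k}_M\frac{1}{M_{k+1}}B_{n-k,M}(x).
\]
The coefficient-wise identification of $M$-exponential series stated in the introduction then gives the proposition. (The introduction's product formula contains a typo, $a_n$ in place of $a_k$; we use the intended Cauchy-type formula $c_n=\sum_k\binom{n}{k}_Ma_kb_{n-k}$. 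It follows from $\frac{t^k}{M_k!}\frac{t^{n-k}}{M_{n-k}!}=\binom{n}{k}_M\frac{t^n}{M_n!}$.)

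The main obstacle is really only bookkeeping: reindexing $(e^t_M-1)/t$ correctly to extract the factor $1/M_{k+1}$. Analytic concerns such as convergence radii can be sidestepped by treating everything as formal power series in $t$. Within the stated disc the identity also holds analytically, since there both sides are convergent series.
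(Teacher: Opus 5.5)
Your proof is correct and follows the same route as the paper. The paper likewise multiplies the defining generating function by $(e^t_M-1)/t=\sum_{k\ge 0}\frac{1}{M_{k+1}}\frac{t^k}{M_k!}$, uses the $M$-binomial Cauchy product, and compares coefficients of $t^n/M_n!$. Your write-up is actually cleaner, since it fixes the index typos in the paper's product formula ($a_n$ for $a_k$) and in its expansion of $(e^t_M-1)/t$.
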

\begin{proof}
From the generating function we have 
\begin{align*}
e^{xt}_M &= \left(\sum_{n=0}^{\infty}B_{n,M}(x)\frac{t^n}{M_n!}\right)\left(\sum_{l=1}^{\infty}\frac{t^{n-1}}{M_n!}\right)\\
&= \sum_{n=0}^{\infty}\left(\sum_{k=0}^{n}\binom{n}{k}_M\frac{1}{M_{k+1}}B_{n-k,M}(x)\right)\frac{t^n}{M_n!}.
\end{align*}
The proof ends with comparison of coefficients.
\end{proof}
By taking $x =0$ we obtain the following
\begin{equation} \label{BN: 1}
\sum_{k=0}^{n}\binom{n}{k}_M\frac{1}{M_{k+1}}B_{n-k,M} = M_n!\delta_{n,0},
\end{equation}
where $\delta_{n,0}$ is the Kronecker delta.
\begin{theorem} \label{TH: BE}
For any complex $x$ and $y$
\begin{equation*}
B_{n,M}(x+_My) = \sum_{k=0}^{n}\binom{n}{k}_MB_{k,M}(x)y^{n-k}
\end{equation*}
and
\begin{equation*}
E_{n,M}(x+_My) = \sum_{k=0}^{n}\binom{n}{k}_ME_{k,M}(x)y^{n-k}.
\end{equation*}
\end{theorem}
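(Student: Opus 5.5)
The plan is to work directly with the generating functions and use the multiplicativity of the $M$-exponential stated in the introduction, $e^{xt}_M e^{yt}_M = e^{(x+_My)t}_M$. Here $e^{(x+_My)t}_M$ is read as the series $\sum_{n\ge 0}(x+_My)^n \frac{t^n}{M_n!}$. In the same spirit, $B_{n,M}(x+_My)$ is understood as the coefficient obtained by replacing $e^{tx}_M$ with $e^{(x+_My)t}_M$ in \eqref{egf:1}. Consequently, the generating function of $B_{n,M}(x+_My)$ is
\[
\frac{t\,e^{(x+_My)t}_M}{e^t_M-1} = \frac{t\,e^{xt}_M}{e^t_M-1}\, e^{yt}_M .
\]

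The key steps are as follows. First, rewrite the left side as $\sum_{n} B_{n,M}(x+_My)\frac{t^n}{M_n!}$. Second, expand the right side as the product of $\sum_{k} B_{k,M}(x)\frac{t^k}{M_k!}$ and $\sum_{j} y^j\frac{t^j}{M_j!}$. Third, apply the Cauchy-type product rule for $M$-exponential series from the introduction. Under that rule the coefficient of $\frac{t^n}{M_n!}$ is $\sum_{k=0}^n \binom{n}{k}_M B_{k,M}(x) y^{n-k}$, using $\frac{M_n!}{M_k!M_{n-k}!}=\binom{n}{k}_M$. Finally, compare coefficients, which is justified by the uniqueness statement for such series.

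The Euler case is word-for-word the same. The factor $\frac{t}{e^t_M-1}$ is replaced by $\frac{2}{e^t_M+1}$, and $\frac{t e^{xt}_M}{e^t_M-1}$ is replaced by the generating function of $E_{n,M}(x)$.

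The only genuinely delicate point is interpretive rather than computational. One has to make precise that $B_{n,M}(x+_My)$ means the coefficient in the expansion of $t\,e^{(x+_My)t}_M/(e^t_M-1)$, since $x+_My$ is not a number. One also has to verify $e^{xt}_Me^{yt}_M=e^{(x+_My)t}_M$, which is itself just the product rule applied to $x^k$ and $y^{n-k}$ together with the $M$-binomial theorem. With that convention fixed, everything can be handled as formal power series in $t$, so convergence issues (the stated radii) need not enter.
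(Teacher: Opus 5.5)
Your proposal is correct and follows essentially the same route as the paper: write the generating function as $\frac{t e^{xt}_M}{e^t_M-1}\cdot e^{yt}_M$ using $e^{xt}_Me^{yt}_M=e^{(x+_My)t}_M$, multiply the two series with the $M$-binomial product rule, and compare coefficients. The paper does this in a few lines (it writes the Euler-type kernel $\frac{2}{e^t_M+1}$ on the Bernoulli side by mistake) and declares the other case analogous; your explicit convention for $B_{n,M}(x+_My)$ and your formal-power-series framing make precise what the paper leaves implicit.
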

\begin{proof}
From the generating function we have,
\begin{align*}
\sum_{n=0}^{\infty}B_{n,M}(x+_My)\frac{t^n}{M_n!}&= \frac{2e^{(x+_My)t}_M}{e^t_M+1} = \frac{2e^{xt}_M}{e^t_M+1} e^{yt}_M \\
&= \left(\sum_{n=0}^{\infty}B_{n,M}(x)\frac{t^n}{M_n!}\right)\left(\sum_{m=0}^{\infty}y^m\frac{t^m}{M_m!}\right)\\
&= \sum_{n=0}^{\infty}\left(\sum_{k=0}^{n}\binom{n}{k}_MB_{k,M}(x)y^{n-k}\right)\frac{t^n}{M_n!}
\end{align*}
The Euler polynomials follow the same method.
\end{proof}

\begin{theorem}
If $n \geq 1$ then 
\begin{equation*}   \label{BP: 1}
B_{n,M}(x+_M 1) - B_{n,M}(x) = M_n x^{n-1}
\end{equation*}
and 
\begin{equation*}   \label{EP: 1}
E_{n,M}(x+_M1) + E_{n,M}(x) = 2x^n.
\end{equation*}
\end{theorem}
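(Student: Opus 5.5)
We argue with generating functions, using the product rule $e^{xt}_M e^{yt}_M = e^{(x+_My)t}_M$ recorded in the introduction, exactly as in the proof of Theorem \ref{TH: BE}. Put $y=1$ there; this gives $e^{(x+_M1)t}_M = e^{xt}_M e^{t}_M$. Hence, from the generating function \eqref{egf:1},
\begin{equation*}
\sum_{n=0}^{\infty}\bigl(B_{n,M}(x+_M1)-B_{n,M}(x)\bigr)\frac{t^n}{M_n!}
= \frac{t e^{xt}_M\,(e^{t}_M-1)}{e^{t}_M-1} = t\,e^{xt}_M = \sum_{m=0}^{\infty} x^m \frac{t^{m+1}}{M_m!}.
\end{equation*}
Next we shift the index to $n=m+1$ and rewrite $\frac{1}{M_{n-1}!} = \frac{M_n}{M_n!}$, which is valid since $M_n! = M_n M_{n-1}!$. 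The right-hand side then becomes $\sum_{n\geq 1} M_n x^{n-1}\frac{t^n}{M_n!}$. Comparing coefficients, which is legitimate by the uniqueness statement for $M$-exponential series in the introduction, gives the first identity for every $n\geq 1$. The coefficient of $t^0$ gives $0=0$, which is why the statement requires $n\ge 1$.

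The Euler identity is handled the same way. Since $e^{(x+_M1)t}_M = e^{xt}_M e^{t}_M$, we get
\begin{equation*}
\sum_{n=0}^{\infty}\bigl(E_{n,M}(x+_M1)+E_{n,M}(x)\bigr)\frac{t^n}{M_n!}
= \frac{2e^{xt}_M\,(e^{t}_M+1)}{e^{t}_M+1} = 2e^{xt}_M = \sum_{n=0}^{\infty}2x^n\frac{t^n}{M_n!}.
\end{equation*}
Comparing coefficients gives $E_{n,M}(x+_M1)+E_{n,M}(x)=2x^n$. This actually holds for all $n\geq0$, so in particular for $n\ge1$.

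Alternatively, one can expand $B_{n,M}(x+_M1)=\sum_k\binom{n}{k}_M B_{k,M}(x)$ via Theorem \ref{TH: BE} and then use the Proposition. That route is messier, so the generating-function route above is preferable.

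The only delicate step is the index shift $t\cdot t^{m}/M_m! = M_{m+1}\, t^{m+1}/M_{m+1}!$. It relies on the recursive definition of the $M$-factorial, so we apply it with the intended meaning $M_n! = M_nM_{n-1}!$ rather than the paper's typo $M_n = M_nM_{n-1}!$. There is also a formal point: the identities $e^{(x+_M1)t}_M=e^{xt}_Me^t_M$ and the $M$-binomial product rule are used as identities of formal power series in $t$. Convergence on the stated discs is therefore not needed for coefficient comparison.
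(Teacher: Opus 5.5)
Your proof is correct and is essentially the paper's argument. The paper also uses Theorem \ref{TH: BE} with $y=1$, which amounts to your product rule $e^{(x+_M1)t}_M=e^{xt}_Me^t_M$, to identify the generating function of $B_{n,M}(x+_M1)$ as $\frac{te^{xt}_Me^t_M}{e^t_M-1}$; it then subtracts to get $te^{xt}_M$, reindexes with $M_n!=M_nM_{n-1}!$, compares coefficients, and treats the Euler case the same way, and your added remarks on the $t^0$ coefficient and on working with formal power series are correct.
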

\begin{proof}
By the previous theorem we observe that
\[B_{n,M}(x+_M1) = \sum_{k=0}^{n}\binom{n}{k}_M B_{k,M}(x).\]
Thus, \[\frac{te^{xt}_Me^{t}_M}{e^{t}_M-1} = \sum_{n=0}^{\infty}B_{n,M}(x+_M 1).\]
Moreover,
\[\frac{te^{xt}_Me^{t}_M}{e^{t}_M-1}- \frac{te^{xt}_M}{e^t_M-1} = te^{xt}_M.\]
Taking the series expansion of the above we get
\[\sum_{n=0}^{\infty}(B_{n,M}(x+_M 1) - B_{n,M}(x))\frac{t^n}{M_n!} = \sum_{n=0}^{\infty}x^n \frac{t^{n+1}}{M_n!}.\]
As $B_{0,M}(x+_M1) = B_{0,M}(x) = 1$ and $\sum_{n=0}^{\infty}x^n \frac{t^{n+1}}{M_n!} = \sum_{n=1}^{\infty}M_nx^{n-1}\frac{t^n}{M_n!},$ we obtain
\[B_{n,M}(x+_M 1) - B_{n,M}(x) = M_n x^{n-1}\]
for $n \geq 1.$ 
Similarly, we have
\[\frac{2e^{xt}_Me^t_M}{e^t_M+1}+\frac{2e^{xt}_M}{e^t_M+1} = 2e^{xt}_M.\]
Therefore,
\[\sum_{n=0}^{\infty}(E_{n,M}(x+_M1)+E_{n,M}(x))\frac{t^n}{M_n!} = \sum_{n=0}^{\infty}2x^n\frac{t^n}{M_n!}.\]
Equating the powers of $t^n$ yields the desired result.
\end{proof}
By taking $x=0,$  we see that
\begin{equation} \label{eq:Bn1}
B_{n,M}(1) = B_{n,M}, \text{~for~} n \geq 2 
\end{equation}
whereas 
\begin{equation} \label{eq:En1}
E_{n,M}(1) = -E_{n,M}, \text{~for~} n \geq 0.
\end{equation} 
\begin{corollary}
If $n\geq 1$ then
\begin{equation*}
B_{n,M}(x) = x^n-\frac{1}{M_{n+1}}\sum_{k=0}^{n-1}\binom{n+1}{k}_MB_{k,M}(x)
\end{equation*}
and
\begin{equation*}
E_{n,M}(x) = x^n-\frac{1}{2}\sum_{k=0}^{n-1}\binom{n}{k}_ME_{k,M}(x).
\end{equation*}
\end{corollary}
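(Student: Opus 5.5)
The plan is to combine the two difference equations of the preceding theorem with the addition formula of Theorem~\ref{TH: BE} at $y=1$, and then isolate the top-degree term in each. Nothing beyond comparing coefficients of polynomial identities already proved should be needed.

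For the Bernoulli part, I would apply the preceding theorem with $n+1$ in place of $n$; this is legitimate since $n+1\geq 1$. It gives
\[
B_{n+1,M}(x+_M1)-B_{n+1,M}(x)=M_{n+1}x^{n}.
\]
Next I would expand $B_{n+1,M}(x+_M1)$ using Theorem~\ref{TH: BE} with $y=1$:
\[
B_{n+1,M}(x+_M1)=\sum_{k=0}^{n+1}\binom{n+1}{k}_M B_{k,M}(x).
\]
The $k=n+1$ term is exactly $B_{n+1,M}(x)$, so it cancels, leaving
\[
\sum_{k=0}^{n}\binom{n+1}{k}_M B_{k,M}(x)=M_{n+1}x^n.
\]
I would then split off the $k=n$ term. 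By symmetry, $\binom{n+1}{n}_M=\binom{n+1}{1}_M=M_{n+1}$, so that term equals $M_{n+1}B_{n,M}(x)$. Since $M_{n+1}=2^{n+1}-1\neq 0$, dividing by $M_{n+1}$ and rearranging yields the stated formula.

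For the Euler part, I would use the relation $E_{n,M}(x+_M1)+E_{n,M}(x)=2x^n$ directly and again expand $E_{n,M}(x+_M1)=\sum_{k=0}^{n}\binom{n}{k}_M E_{k,M}(x)$ by Theorem~\ref{TH: BE}. The $k=n$ term here is $E_{n,M}(x)$, with coefficient $\binom{n}{n}_M=1$. Combined with the other $E_{n,M}(x)$ this gives
\[
2E_{n,M}(x)+\sum_{k=0}^{n-1}\binom{n}{k}_M E_{k,M}(x)=2x^n,
\]
and dividing by $2$ gives the claim. This argument in fact works for every $n\geq 0$.

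The only step needing care is the index shift in the Bernoulli case. One must use the difference equation at $n+1$ rather than at $n$, so that after the top term cancels the remaining top coefficient is $M_{n+1}$, matching the stated $\binom{n+1}{k}_M$ and the factor $1/M_{n+1}$. One must also check that this coefficient is nonzero, which holds because $M_{n+1}\geq 1$. Everything else is a direct rearrangement.
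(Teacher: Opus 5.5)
Your proposal is correct and matches the paper's proof. Like the paper, you apply the difference equation at index $n+1$ in the Bernoulli case (and at $n$ in the Euler case), expand the shifted polynomial with the addition theorem at $y=1$, cancel the top term, and solve for $B_{n,M}(x)$ (respectively $E_{n,M}(x)$). You also state explicitly two facts the paper leaves implicit: $\binom{n+1}{n}_M=M_{n+1}$ and $M_{n+1}\neq 0$.
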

\begin{proof}
As \[B_{n,M}(x+_M1) = \sum_{k=0}^{n}\binom{n}{k}_MB_{k,M}(x)\]
and \[E_{n,M}(x+_M1) = \sum_{k=0}^{n}\binom{n}{k}_ME_{k,M}(x),\]
by the previous theorem we obtain 
\begin{align*}
\sum_{k=0}^{n+1}\binom{n+1}{k}_MB_{k,M}(x)-B_{n+1,M}(x) = M_{n+1} x^{n}.
\end{align*}
Hence, 
\[B_{n,k}(x) = x^n-\frac{1}{M_{n+1}}\sum_{k=0}^{n-1}\binom{n+1}{k}_MB_{k,M}(x),\]
with initial term $B_{0,M}(x) = 1.$
In the similar manner,
\[\sum_{k=0}^{n}\binom{n}{k}_ME_{k,M}(x)+ E_{n,M}(x) = 2x^n.\]
Hence,
\[E_{n,M}(x) = x^n-\frac{1}{2}\sum_{k=0}^{n-1}\binom{n}{k}_ME_{k,M}(x),\]
where $E_0(x) = 1.$
\end{proof}
As a consequence, we see that the sets $\{B_{0,M}(x), B_{1,M}(x), \dots B_{n,M}(x)\}$ as well as $\{E_{0,M}(x), \\E_{1,M}(x), \dots E_{n,M}(x)\}$ span the vector space of polynomials with degree at most $n.$
Further, taking $x=0$ yields the following recursions
\begin{equation}
B_{n,M} = -\frac{1}{M_{n+1}}\sum_{k=0}^{n-1}\binom{n+1}{k}_MB_{k,M}
\end{equation}
and 
\begin{equation}
E_{n,M} = -\frac{1}{2}\sum_{k=0}^{n-1}\binom{n}{k}_ME_{k,M}.
\end{equation}
Moreover, for $n \geq 2$ we obtain
\begin{equation} \label{eq: B1}
B_{n,M} = \sum_{k=0}^{n}\binom{n}{k}_MB_{k,M}
\end{equation}
and
\begin{equation} \label{eq: E1}
E_{n,M} = -\sum_{k=0}^{n}\binom{n}{k}_ME_{k,M}
\end{equation}
or 
\begin{equation} \label{eq: E2}
\sum_{k=0}^{n}\binom{n}{k}_ME_{k,M}+E_{n,M} = 2\delta_{n,0}.
\end{equation}
In the next result, we explicitly find the value of the functions $B_{n,M}(x)$ and $E_{n,M}(x)$ using the corresponding numbers.
\begin{theorem} \label{th: 3_1}
The functions $B_{n,M}(x)$ and $E_{n,M}(x)$ respectively satisfy the relations
\begin{equation*} 
B_{n,M}(x) = \sum_{k=0}^{n}\binom{n}{k}_MB_{k,M}x^{n-k}
\end{equation*}
and
\begin{equation*} 
E_{n,M}(x) = \sum_{k=0}^{n}\binom{n}{k}_ME_{k,M}x^{n-k}.
\end{equation*}
\end{theorem}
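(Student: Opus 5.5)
The plan is to work directly with the generating functions and the $M$-product rule from the introduction. The key observation is that the generating function of $B_{n,M}(x)$ splits into a factor independent of $x$ times an $M$-exponential:
\[
\frac{te^{xt}_M}{e^t_M-1} = \frac{t}{e^t_M-1}\, e^{xt}_M .
\]
Setting $x=0$ in \eqref{egf:1} and using $e^0_M=1$, the first factor is $\sum_{k\ge 0} B_{k,M}\, t^k/M_k!$. The second factor is $\sum_{m\ge 0} x^m\, t^m/M_m!$ by the definition of $e^{xt}_M$.

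Next we multiply these two series. By the $M$-Cauchy product rule stated at the end of the introduction (read correctly as $c_n=\sum_k \binom{n}{k}_M a_k b_{n-k}$), the coefficient of $t^n/M_n!$ in the product is
\[
\sum_{k=0}^{n}\binom{n}{k}_M B_{k,M}\, x^{n-k}.
\]
Since $A(t)=B(t)$ forces equality of all coefficients with respect to the basis $t^n/M_n!$, comparing with \eqref{egf:1} gives the first identity. The same argument, with $t/(e^t_M-1)$ replaced by $2/(e^t_M+1)=\sum_k E_{k,M}\,t^k/M_k!$, gives the Euler identity.

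An alternative that avoids new computation is to apply Theorem~\ref{TH: BE} with the roles of the variables chosen suitably. Take the first argument to be $0$ and the second to be $x$. Then $0+_M x$ has $(0+_Mx)^n=x^n$ for all $n$, because in the $M$-binomial expansion only the term with $k=0$ survives (here $0^0=1$). Theorem~\ref{TH: BE} then gives $B_{n,M}(x)=\sum_k \binom{n}{k}_M B_{k,M}(0)x^{n-k}$, which is the claim, and likewise for $E_{n,M}$.

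I will present the generating-function version, since it is self-contained. The only points needing care are that:
\begin{itemize}
\item the product is taken in the $M$-exponential sense, so the coefficients involve $\binom{n}{k}_M$ and not ordinary binomials;
\item coefficient comparison is valid because both sides are formal series in $t^n/M_n!$.
\end{itemize}
No step is a real obstacle. The main thing to watch is the correct indexing $a_kb_{n-k}$ in the product formula, since the statement in the introduction contains the typo $a_n$.
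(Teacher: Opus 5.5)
Your generating-function argument is the paper's proof: factor $\frac{te^{xt}_M}{e^t_M-1}$ as $\frac{t}{e^t_M-1}\cdot e^{xt}_M$, take the $M$-Cauchy product, and compare coefficients of $t^n/M_n!$; you also correctly fix the index typos (the $a_n$ in the product rule, and the paper's own $\sum_{k=0}^{m}\binom{n}{k}_M B_{n,M}x^{n-k}$). Your alternative via Theorem~\ref{TH: BE} with first argument $0$ is also valid, since $(0+_Mx)^n=x^n$ gives $e^{(0+_Mx)t}_M=e^{xt}_M$.
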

\begin{proof}
From the generating functions we obtain 
\begin{align*}
\sum_{n=0}^{\infty} B_{n,M}(x) \frac{t^n}{M_n!} &= \frac{t}{e^t_M-1}e^{tx}_M \\
&= \left(\sum_{n=0}^{\infty} B_{n,M} \frac{t^n}{M_n!}\right)\left(\sum_{m=0}^{\infty} x^m\frac{t^m}{M_m!}\right)\\
&= \sum_{n=0}^{\infty}\left(\sum_{k=0}^{m}\binom{n}{k}_MB_{n,M}x^{n-k}\right)\frac{t^n}{M_n!}.
\end{align*}
The result concerning Mersenne-Euler polynomials follows the same procedure.
\end{proof}
By the above theorem we conclude that the functions $B_{n,M}(x)$ and $E_{n,M}(x)$ are actually monic polynomials of degree $n,$ hence they are called Mersenne-Bernoulli and Mersenne-Euler polynomials respectively.
First few terms of the Mersenne-Bernoulli and Mersenne-Euler polynomials are
\[
B_{0,M}(x) = 1,
~B_{1,M}(x) = x-\frac{1}{3},
~B_{2,M}(x) = x^2-x+\frac{4}{21},
~B_{3,M}(x) = x^3-\frac{7}{3}x^2+\frac{4}{3}x-\frac{8}{45},\dots
\]
\[
E_{0,M}(x) = 1,
~E_{1,M}(x) = x-\frac{1}{2},
~E_{2,M}(x) = x^2-\frac{3}{2}x+\frac{1}{4},
~E_{3,M}(x) = x^3-\frac{7}{2}x^2+\frac{7}{4}x+\frac{3}{8}, \dots
\]
In the next two results we find the relations between these two class of polynomials.
\begin{theorem}
For $n \geq 1,$ we have
\begin{equation*}
E_{n-1,M}(x) = \frac{-2}{M_n}\sum_{k=1}^{n}\binom{n}{k}_ME_{k,M}B_{n-k,M}(x).
\end{equation*}
\end{theorem}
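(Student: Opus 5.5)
The plan is to compare the $M$-exponential generating functions of the two sides, using the product rule for series in $t^n/M_n!$ stated in the introduction. The sum runs over $k\geq 1$, so the first step is to remove the $k=0$ term from the generating function of the Mersenne-Euler numbers. Since $E_{0,M}=1$, setting $x=0$ in the generating function of $E_{n,M}(x)$ gives
\begin{equation*}
\sum_{k=1}^{\infty}E_{k,M}\frac{t^k}{M_k!} = \frac{2}{e^t_M+1}-1 = \frac{1-e^t_M}{e^t_M+1}.
\end{equation*}

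Next I multiply this series by the generating function \eqref{egf:1} of $B_{n,M}(x)$. The factor $e^t_M-1$ cancels, leaving
\begin{equation*}
\frac{1-e^t_M}{e^t_M+1}\cdot\frac{te^{xt}_M}{e^t_M-1} = -\frac{te^{xt}_M}{e^t_M+1} = -\frac{t}{2}\sum_{m=0}^{\infty}E_{m,M}(x)\frac{t^m}{M_m!}.
\end{equation*}
By the $M$-Cauchy product, the left-hand side equals
\begin{equation*}
\sum_{n=0}^{\infty}\left(\sum_{k=1}^{n}\binom{n}{k}_ME_{k,M}B_{n-k,M}(x)\right)\frac{t^n}{M_n!}.
\end{equation*}
The $n=0$ coefficient is an empty sum, which is consistent with the right-hand side having no constant term.

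The final step is to rewrite the right-hand side in the basis $t^n/M_n!$. Since $M_n!=M_nM_{n-1}!$, we have
\begin{equation*}
\frac{t^{m+1}}{M_m!}=M_{m+1}\frac{t^{m+1}}{M_{m+1}!}.
\end{equation*}
Hence the right-hand side equals
\begin{equation*}
-\frac12\sum_{n\geq1}M_nE_{n-1,M}(x)\frac{t^n}{M_n!}.
\end{equation*}
Comparing coefficients for $n\geq 1$ gives $\sum_{k=1}^{n}\binom{n}{k}_ME_{k,M}B_{n-k,M}(x) = -\frac{M_n}{2}E_{n-1,M}(x)$. Dividing by $M_n$, which is nonzero for $n\geq1$, yields the stated identity.

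The only step needing care is this reindexing: the shift by one power of $t$ must be converted correctly into the $M$-factorial normalization, and that conversion is exactly where the factor $M_n$ comes from. Everything else is formal manipulation of power series.
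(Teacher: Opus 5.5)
Your proof is correct and is essentially the paper's argument. The paper writes $t\sum_{n\ge0}E_{n,M}(x)\frac{t^n}{M_n!}$ as $2\bigl(1-\frac{2}{e^t_M+1}\bigr)\frac{te^{xt}_M}{e^t_M-1}$ and uses $E_{0,M}=1$ only after expanding, to cancel the $k=0$ term against $B_{n,M}(x)$; you instead remove that term first via $\frac{2}{e^t_M+1}-1$, which is the same identity rearranged, and then finish with the same reindexing $\frac{t^{m+1}}{M_m!}=M_{m+1}\frac{t^{m+1}}{M_{m+1}!}$ and coefficient comparison.
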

\begin{proof}
We see that
\begin{align*}
t\sum_{n=0}^{\infty}E_{n,M}(x)\frac{t^n}{M_n!}& = \frac{2te^{xt}_M}{e^t_M+1}\\
&= \left(e^t_M-1\right)\left(\frac{2}{e^t_M+1}\right)\left(\frac{te^{xt}_M}{e^t_M-1}\right)\\
&= 2\left(1-\frac{2}{e^t_M+1}\right)\left(\frac{te^{xt}_M}{e^t_M-1}\right)\\
&= 2\left(1-\sum_{n=0}^{\infty}E_{n,M}\frac{t^n}{M_n!}\right)\left(\sum_{l=0}^{\infty}B_{l,M}(x)\frac{t^l}{M_l!}\right)\\
&= \sum_{n=0}^{\infty}2\left(B_{n,M}(x)-\sum_{k=0}^{n}\binom{n}{k}_ME_{k,M}B_{n-k,M}(x)\right)\frac{t^n}{M_{n}!}\\
&= \sum_{k=1}^{\infty}\left(-2\sum_{k=1}^{n}\binom{n}{k}_ME_{k,M}B_{n-k,M}(x)\right)\frac{t^n}{M_n!}.
\end{align*}
However,
\begin{align*}
t\sum_{n=0}^{\infty}E_{n,M}(x)\frac{t^n}{M_n!} = \sum_{n=0}^{\infty}E_{n,M}(x)\frac{t^{n+1}}{M_n!}
= \sum_{n=1}^{\infty}M_{n}E_{n-1,M}(x)\frac{t^n}{M_n!}.
\end{align*}
Hence, we get the required result by equating the above two equations.
\end{proof}
\begin{theorem}
For $n \geq 1$ we have
\begin{equation*}
B_{n,M}(x) = \frac{M_nE_{n-1,M}}{2}+\sum_{k=0}^{n}\binom{n}{k}_MB_{k,M}E_{n-k,M}(x).
\end{equation*}
\end{theorem}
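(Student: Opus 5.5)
The plan is to work entirely with generating functions, in the same way as the preceding theorem, but splitting the Bernoulli generating function instead of the Euler one. The key identity is
\[
\frac{te^{xt}_M}{e^t_M-1}=\frac{t}{e^t_M-1}\cdot\frac{2e^{xt}_M}{e^t_M+1}\cdot\frac{e^t_M+1}{2}.
\]
Writing $\frac{e^t_M+1}{2}=\frac{e^t_M-1}{2}+1$ and distributing gives
\[
\frac{te^{xt}_M}{e^t_M-1}=\frac{t}{2}\cdot\frac{2e^{xt}_M}{e^t_M+1}+\frac{t}{e^t_M-1}\cdot\frac{2e^{xt}_M}{e^t_M+1}.
\]
The left side is $\sum_n B_{n,M}(x)\frac{t^n}{M_n!}$.

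Next I expand the two terms on the right. For the first term I use the index shift from the previous proof, $t\sum_{n\ge 0}E_{n,M}(x)\frac{t^n}{M_n!}=\sum_{n\ge1}M_nE_{n-1,M}(x)\frac{t^n}{M_n!}$, so its coefficient of $t^n/M_n!$ is $\frac{M_n}{2}E_{n-1,M}(x)$. For the second term I take the product of $\sum_k B_{k,M}\frac{t^k}{M_k!}$ (the case $x=0$ of (\ref{egf:1})) with $\sum_l E_{l,M}(x)\frac{t^l}{M_l!}$. The $M$-Cauchy product rule from the introduction gives the coefficient $\sum_{k=0}^n\binom{n}{k}_M B_{k,M}E_{n-k,M}(x)$. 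Comparing coefficients for $n\ge1$ then gives the identity.

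The main obstacle is not the algebra but the first term. The computation naturally produces $\frac{M_n}{2}E_{n-1,M}(x)$, evaluated at $x$, rather than the constant $E_{n-1,M}$ appearing in the statement. A check at $n=2$, using the listed polynomials, shows that the version with $E_{1,M}(x)$ gives exactly $x^2-x+\frac{4}{21}=B_{2,M}(x)$, while the constant $E_{1,M}$ does not. At $n=1$ the two versions agree, since $E_{0,M}(x)=1$. I would therefore carry out the proof with $E_{n-1,M}(x)$ and read the statement's $E_{n-1,M}$ in that sense. The only other care needed is to handle the $n=0$ coefficient separately: there the shifted series contributes nothing and the identity reduces to $1=B_{0,M}E_{0,M}(x)$, which explains the restriction $n\ge1$.
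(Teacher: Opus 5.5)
Your argument is exactly the paper's proof: the same splitting $\frac{e^t_M+1}{2}\cdot\frac{t}{e^t_M-1}=\frac{t}{2}+\frac{t}{e^t_M-1}$, the same index shift for the $\frac{t}{2}$ term, and the same $M$-Cauchy product for the other term. Your correction is also right. The paper's proof silently writes $E_{n,M}$ for $E_{n,M}(x)$ when expanding $\frac{t}{2}\sum_{l}E_{l,M}(x)\frac{t^l}{M_l!}$, so the printed identity fails as a polynomial identity for every $n\ge 2$; your $n=2$ check, giving $x^2-\frac{5}{2}x+\frac{4}{21}\neq B_{2,M}(x)$, is correct, while the $x=0$ case (the paper's next Corollary) is unaffected.
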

\begin{proof}
We observe that
\begin{align*}
\sum_{n=0}^{\infty}B_{n,M}(x)\frac{t^n}{M_n!} &= \frac{te^{xt}_M}{e^t_M-1}\\
&= \left(\frac{e^t_M+1}{2}\right)\left(\frac{t}{e^t_M-1}\right)\left(\frac{2e^{xt}_M}{e^t_M+1}\right)\\
&= \frac{1}{2}\left(t+\frac{2t}{e^t_M-1}\right)\left(\frac{2e^{xt}_M}{e^t_M+1}\right)\\
&=\left(\frac{t}{2}+ \sum_{n=0}^{\infty}B_{n,M}\frac{t^n}{M_n!}\right)\left(\sum_{l=0}^{\infty}E_{l,M}(x)\frac{t^l}{M_l!}\right)\\
&= \sum_{n=0}^{\infty}\frac{E_{n,M}}{2}\frac{t^{n+1}}{M_n!}+
\sum_{n=0}^{\infty}\sum_{k=0}^{n}\binom{n}{k}_MB_{k,M}E_{n-k,M}(x)\frac{t^{n}}{M_n!}\\
& = 1 + \sum_{n=1}^{\infty}\left(\frac{M_nE_{n-1,M}}{2}+\sum_{k=0}^{n}\binom{n}{k}_MB_{k,M}E_{n-k,M}(x)\right)\frac{t^{n}}{M_n!}.
\end{align*}
The theorem ends with comparing the coefficients.
\end{proof}
\begin{corollary}
If $n \geq 1,$
\begin{equation*}
\sum_{k=0}^{n}\binom{n}{k}_MB_{k,M}E_{n-k,M} = \frac{-M_nE_{n-1,M}}{2}+B_{n,M}.
\end{equation*}
\end{corollary}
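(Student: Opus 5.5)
The plan is to specialise the preceding theorem at $x=0$, since the corollary is exactly that identity with the polynomial $E_{n-k,M}(x)$ replaced by the number $E_{n-k,M}=E_{n-k,M}(0)$. Setting $x=0$ in
\[
B_{n,M}(x) = \frac{M_nE_{n-1,M}}{2}+\sum_{k=0}^{n}\binom{n}{k}_MB_{k,M}E_{n-k,M}(x)
\]
turns the left side into $B_{n,M}(0)=B_{n,M}$ and each $E_{n-k,M}(0)$ into $E_{n-k,M}$. This gives
\[
B_{n,M}=\frac{M_nE_{n-1,M}}{2}+\sum_{k=0}^{n}\binom{n}{k}_MB_{k,M}E_{n-k,M}
\]
for every $n\ge 1$. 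Moving $\frac{M_nE_{n-1,M}}{2}$ to the other side then yields the stated identity.

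Because the corollary rests entirely on the previous theorem, I would check that theorem's generating-function derivation at the one delicate point. That point is the identity
\[
\frac{e^t_M+1}{2}\cdot\frac{t}{e^t_M-1}=\frac{t}{2}+\frac{t}{e^t_M-1},
\]
which holds because $\frac{e^t_M+1}{e^t_M-1}=1+\frac{2}{e^t_M-1}$.

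Next, the product $\frac{t}{2}\cdot\sum_l E_{l,M}\frac{t^l}{M_l!}$ must be reindexed as $\sum_{n\ge1}\frac{M_nE_{n-1,M}}{2}\frac{t^n}{M_n!}$. This uses $M_n!=M_nM_{n-1}!$, the same step used in the earlier theorem relating $E_{n-1,M}(x)$ to the $B_{n-k,M}(x)$.

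As a cross-check, I would derive the identity directly. Multiplying $\frac{t}{e^t_M-1}$ by $\frac{2}{e^t_M+1}$ and using the same partial-fraction manipulation gives
\[
\left(\sum B_{n,M}\frac{t^n}{M_n!}\right)\left(\sum E_{n,M}\frac{t^n}{M_n!}\right)=\frac{2t}{e^{2t}_M\text{-like}}\cdots
\]
This route is messy, so simple substitution is preferable.

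There is no real obstacle, only bookkeeping: the $n=0$ term has to be separated out. That is why the statement is restricted to $n\ge1$, where the $\frac{M_nE_{n-1,M}}{2}$ term is defined. A final sanity check at $n=1$: $B_{0,M}E_{1,M}+B_{1,M}E_{0,M}=-\frac12-\frac13=-\frac56$, and $-\frac{M_1E_{0,M}}{2}+B_{1,M}=-\frac12-\frac13=-\frac56$, which agree.
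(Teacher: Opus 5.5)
Your proposal is correct and is exactly the paper's argument: set $x=0$ in the preceding theorem, use $B_{n,M}(0)=B_{n,M}$ and $E_{n-k,M}(0)=E_{n-k,M}$, and rearrange. One harmless subtlety: in that theorem the extra term should properly read $\frac{M_nE_{n-1,M}(x)}{2}$, since $\frac{t}{2}$ multiplies $\sum_l E_{l,M}(x)\frac{t^l}{M_l!}$; at $x=0$ this coincides with $\frac{M_nE_{n-1,M}}{2}$, so your specialisation and your $x=0$ reindexing check are both valid.
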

\begin{proof}
Taking $x=0$ in the previous theorem proves this.
\end{proof}
We now turn towards more convolution like results of the Mersenne-Bernoulli and Mersenne-Euler numbers. For this purpose we define the operator $\mathcal{D}^t$ by
\begin{equation}
\mathcal{D}^t\left(f(t)\right)= 
\begin{cases}
0, & \text{~if~} t = 0\\
\frac{f(2t)-f(t)}{t}, & \text{~else.}
\end{cases}
\end{equation}
By simple computation, we can show that
\[\mathcal{D}^t\left(af(t)+bg(t)\right)= a\mathcal{D}^t\left(f(t)\right)+b\mathcal{D}^t\left(g(t)\right),\]
\[\mathcal{D}^t\left(f(t)g(t)\right)= g(2t)\mathcal{D}^t\left(f(t)\right)+f(t)\mathcal{D}^t\left(g(t)\right),\]
and for $f(t) \neq 0$
\[\mathcal{D}^t\left(\frac{1}{f(t)}\right) = \frac{-\mathcal{D}^t\left(f(t)\right)}{f(t)f(2t)}.\]
By direct calculation, we observe the following
\begin{itemize}
\item[i.]$\mathcal{D}^t(c) = 0$ for any constant $c.$
\item[ii.]$\mathcal{D}^t(t^n) = M_nt^{n-1}$ for $n \geq 1.$
\item[iii.]$\mathcal{D}^t(e^{xt}_M) = xe^{xt}_M.$
\end{itemize}
We note that the operator $\mathcal{D}^t$ is similar to the conventional derivative operator, we call it the \textbf{Mersenne-derivative} operator.
\begin{proposition} \label{prop:2_9}
For $n \geq 1$ we have 
\begin{equation*} \label{eq:DB}
\mathcal{D}^x(B_{n,M}(x)) = M_nB_{n-1,M}(x)
\end{equation*}
and
\begin{equation*} \label{eq:DE}
\mathcal{D}^x(E_{n,M}(x)) = M_nE_{n-1,M}(x).
\end{equation*}
\end{proposition}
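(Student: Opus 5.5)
The plan is to use the explicit representation of Theorem \ref{th: 3_1}, $B_{n,M}(x)=\sum_{k=0}^{n}\binom{n}{k}_M B_{k,M}x^{n-k}$, and apply $\mathcal{D}^x$ term by term. Linearity of $\mathcal{D}^x$ holds because the coefficients $B_{k,M}$ are constants. For $x\neq 0$ the rules (i) and (ii) give $\mathcal{D}^x(x^{n-k})=M_{n-k}x^{n-k-1}$ when $k\le n-1$, and $\mathcal{D}^x(B_{n,M})=0$ for the $k=n$ term. Hence
\[
\mathcal{D}^x(B_{n,M}(x))=\sum_{k=0}^{n-1}\binom{n}{k}_M M_{n-k}B_{k,M}x^{n-1-k}.
\]

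The key step is the $M$-binomial identity $\binom{n}{k}_M M_{n-k}=M_n\binom{n-1}{k}_M$ for $0\le k\le n-1$. This follows directly from the definition of the $M$-factorial, since $M_n!=M_nM_{n-1}!$ and $M_{n-k}!=M_{n-k}M_{n-k-1}!$:
\[
\binom{n}{k}_M M_{n-k}=\frac{M_n M_{n-1}!\,M_{n-k}}{M_k!\,M_{n-k}M_{n-k-1}!}=M_n\binom{n-1}{k}_M .
\]
Substituting this, we get
\[
\mathcal{D}^x(B_{n,M}(x))=M_n\sum_{k=0}^{n-1}\binom{n-1}{k}_M B_{k,M}x^{n-1-k}.
\]
By Theorem \ref{th: 3_1} applied with $n-1$, the last sum is $B_{n-1,M}(x)$, which gives the first identity. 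The Euler case is the same computation with $E_{k,M}$ in place of $B_{k,M}$.

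The one obstacle is the convention $\mathcal{D}^x(f)=0$ at $x=0$. That convention does not match $M_nB_{n-1,M}(0)=M_nB_{n-1,M}$, which is generally nonzero. I would handle this by reading the identity for $x\neq 0$, where the difference quotient is defined. At $x=0$ the identity holds in the limiting sense, since both sides are polynomials agreeing for all $x\neq0$.

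As a cross-check, one could argue via generating functions. Property (iii), $\mathcal{D}^x(e^{xt}_M)=te^{xt}_M$ (note the operator acts in $x$, so the eigenvalue is $t$), applied to $\frac{te^{xt}_M}{e^t_M-1}$ gives $t\sum_n B_{n,M}(x)\frac{t^n}{M_n!}$. Reindexing as in the earlier proofs, using $\frac{t^{n+1}}{M_n!}=M_{n+1}\frac{t^{n+1}}{M_{n+1}!}$, yields the same result. I would present the coefficientwise argument as the main proof, since it avoids interchanging $\mathcal{D}^x$ with an infinite sum.
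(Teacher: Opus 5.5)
Your proof is correct, but your main argument differs from the paper's, which is essentially your ``cross-check''. The paper applies $\mathcal{D}^x$ to $\frac{te^{xt}_M}{e^t_M-1}$, uses $\mathcal{D}^x(e^{xt}_M)=te^{xt}_M$, and reindexes $\sum_{n\ge0}B_{n,M}(x)\frac{t^{n+1}}{M_n!}=\sum_{n\ge1}M_nB_{n-1,M}(x)\frac{t^n}{M_n!}$. You instead differentiate the finite expansion $B_{n,M}(x)=\sum_k\binom{n}{k}_MB_{k,M}x^{n-k}$ term by term and use the absorption identity $\binom{n}{k}_MM_{n-k}=M_n\binom{n-1}{k}_M$, which you verify correctly.

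The generating-function route is shorter and needs no binomial manipulation. It also covers the Euler case verbatim, because only $e^{xt}_M$ depends on $x$. Your route is a finite polynomial computation, but it relies on the explicit expansion, which the paper itself derives from the generating function. Both proofs therefore rest on the same fact: the identity holds for any $\sum_k\binom{n}{k}_Ma_kx^{n-k}$ with constant $a_k$.

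Your stated reason for preferring your route is weak. For fixed $x\neq0$, $\mathcal{D}^x$ is just a combination of evaluations at $x$ and $2x$ divided by $x$, so it commutes with the convergent series in $t$ with no interchange issue.

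Your remark about $x=0$, on the other hand, is a real point the paper overlooks. With the literal convention $\mathcal{D}^x(f)=0$ at $x=0$, the identity fails there: for $n=1$ the left side is $0$ while $M_1B_{0,M}(0)=1$. Rule (ii) fails there for $n=1$ too. So both proofs actually establish the statement for $x\neq0$, with $x=0$ recovered only by continuous extension.
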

\begin{proof}
We see that
\[\mathcal{D}^x\left(\frac{te^{xt}_M}{e^t_M-1}\right)= \sum_{n=0}^{\infty}\mathcal{D}^x(B_{n,M}(x))\frac{t^n}{M_n!}.\]
However,
\begin{align*}
\mathcal{D}^x\left(\frac{te^{xt}_M}{e^t_M-1}\right) &= \frac{t^2e^{xt}_M}{e^t_M-1}\\
&= \sum_{n=0}^{\infty}B_{n,M}(x)\frac{t^{n+1}}{M_n!}\\
&= \sum_{n=1}^{\infty}M_nB_{n-1,M}(x)\frac{t^{n}}{M_n!}.
\end{align*}
We obtain the required result from the above two equations.
The Mersenne-Euler polynomials follow the same method.
\end{proof}
\begin{theorem}
If $n \geq 1,$ then the Mersenne-Bernoulli numbers satisfy 
\begin{equation*}
\sum_{k=0}^{n}\binom{n}{k}_M 2^k B_{k,M}B_{n-k,M} = -2(M_{n-1}B_{n,M}+2^{n-2}M_nB_{n-1,M}).
\end{equation*}
\end{theorem}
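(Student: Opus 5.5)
The plan is to obtain the identity from a functional equation for $f(t)=\dfrac{t}{e^t_M-1}=\sum_{n\ge 0}B_{n,M}\frac{t^n}{M_n!}$, produced with the Mersenne-derivative $\mathcal{D}^t$. The left-hand side is exactly the coefficient of $\frac{t^n}{M_n!}$ in the product $f(2t)f(t)$, because
\[
f(2t)=\sum_{k\ge 0}2^kB_{k,M}\frac{t^k}{M_k!}
\]
and the product rule for $M$-exponential series applies. So it suffices to express $f(t)f(2t)$ linearly in terms of $f(2t)$, $tf(2t)$ and $t\,\mathcal{D}^t f(t)$.

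\emph{Step 1: compute $\mathcal{D}^t f$.} Write $f=t\cdot h$ with $h(t)=1/(e^t_M-1)$. The product rule $\mathcal{D}^t(fg)=g(2t)\mathcal{D}^t f+f\mathcal{D}^t g$, together with $\mathcal{D}^t(t)=M_1=1$, gives
\[
\mathcal{D}^t f = h(2t)+t\,\mathcal{D}^t h .
\]
The reciprocal rule, with $\mathcal{D}^t(e^t_M-1)=e^t_M$ by property iii, gives
\[
\mathcal{D}^t h=-\frac{e^t_M}{(e^t_M-1)(e^{2t}_M-1)} .
\]

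\emph{Step 2: rewrite in terms of $f$.} Use $f(t)f(2t)=\dfrac{2t^2}{(e^t_M-1)(e^{2t}_M-1)}$, $h(2t)=\dfrac{f(2t)}{2t}$, and split $e^t_M=(e^t_M-1)+1$. This gives
\[
\mathcal{D}^t f=\frac{f(2t)}{2t}-\frac{f(2t)}{2}-\frac{f(t)f(2t)}{2t},
\]
that is,
\[
f(t)f(2t)=f(2t)-t f(2t)-2t\,\mathcal{D}^t f(t).
\]
This bookkeeping is the main place an error could creep in. In particular, one must check that $e^{2t}_M$ really is the same as $e^{x t}_M$ at $x=2$, so that the argument doubling in $\mathcal{D}^t$ matches $f(2t)$.

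\emph{Step 3: compare coefficients of $\frac{t^n}{M_n!}$ for $n\ge1$.}
\begin{itemize}
\item By property ii, $\mathcal{D}^t f=\sum_{n\ge1}M_nB_{n,M}\frac{t^{n-1}}{M_n!}$, so $2t\,\mathcal{D}^t f$ contributes $2M_nB_{n,M}$.
\item The term $tf(2t)=\sum_{m}2^mB_{m,M}\frac{t^{m+1}}{M_m!}$ contributes $2^{n-1}M_nB_{n-1,M}$, using $M_n!=M_nM_{n-1}!$.
\item The term $f(2t)$ contributes $2^nB_{n,M}$.
\end{itemize}
Hence the left-hand side equals
\[
(2^n-2M_n)B_{n,M}-2^{n-1}M_nB_{n-1,M}.
\]
Finally, $2^n-2(2^n-1)=2-2^n=-2M_{n-1}$ and $2^{n-1}=2\cdot 2^{n-2}$, which yields $-2\left(M_{n-1}B_{n,M}+2^{n-2}M_nB_{n-1,M}\right)$, as claimed.
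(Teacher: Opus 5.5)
Your proposal is correct and follows the paper's proof step for step: the same function $f(t)=t/(e^t_M-1)$, the same computation of $\mathcal{D}^t f$ via the product and reciprocal rules, the same functional equation $f(t)f(2t)=(1-t)f(2t)-2t\,\mathcal{D}^t f(t)$, and the same coefficient comparison. Your explicit splitting $e^t_M=(e^t_M-1)+1$ supplies the ``it can be calculated'' step that the paper leaves to the reader, and your final simplification $2^n-2M_n=-2M_{n-1}$ is correct.
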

\begin{proof}
Let $f(t) = \frac{t}{e^t_M-1},$ then
\begin{align*}
\mathcal{D}^t(f(t))&= \frac{1}{e^{2t}_M-1}+t\mathcal{D}^t\left(\frac{1}{e^t_M-1}\right)\notag\\
&= \frac{1}{e^{2t}_M-1}-\frac{te^t_M}{(e^t_M-1)(e^{2t}_M-1)}.
\end{align*}
Further,
\begin{align*} 
f(t)f(2t) &= \left(\frac{t}{e^t_M-1}\right)\left(\frac{2t}{e^{2t}_M-1}\right)\\
&= \frac{2t^2}{(e^t_M-1)(e^{2t}_M-1)}.
\end{align*}
It can be calculated that
\begin{equation} \label{eq:CBN}
f(t)f(2t) = (1-t)f(2t)-2t\mathcal{D}^t(f(t)).
\end{equation}
Now,
\begin{align*}
f(t)f(2t) &= \left(\sum_{n=0}^{\infty}B_{n,M}\frac{t^n}{M_n!}\right)\left(\sum_{l=0}^{\infty}2^lB_{l,M}\frac{t^l}{M_l!}\right)\\
&= \sum_{n=0}^{\infty}\left(\sum_{k=0}^{n}\binom{n}{k}_M2^kB_{k,M}B_{n-k,M}\right)\frac{t^n}{M_n!}
\end{align*}
whereas
\begin{align*}
(1-t)f(2t)-2t\mathcal{D}^t(f(t)) &= (1-t)\sum_{n=0}^{\infty}2^nB_{n,M}\frac{t^n}{M_n!}-2t\sum_{n=0}^{\infty}B_{n,M}\mathcal{D}^t\left(\frac{t^n}{M_n!}\right)\\
&= \sum_{n=0}^{\infty}2^nB_{n,M}\frac{t^n}{M_n!}-\sum_{n=0}^{\infty}2^nB_{n,M}\frac{t^{n+1}}{M_n!}-2t\sum_{n=1}^{\infty}B_{n,M}\frac{t^{n-1}}{M_{n-1}!}\\
&= \sum_{n=0}^{\infty}2^nB_{n,M}\frac{t^n}{M_n!}-\sum_{n=1}^{\infty}2^{n-1}M_nB_{n-1,M}\frac{t^n}{M_n!}-\sum_{n=1}^{\infty}2M_nB_{n,M}\frac{t^n}{M_n!}\\
&= 1+\sum_{n=1}^{\infty}(2^nB_{n,M}-2^{n-1}M_nB_{n-1,M}-2M_nB_{n,M})\frac{t^n}{M_n!}.
\end{align*}
Finally, by \eqref{eq:CBN} we obtain
\begin{align*}
\sum_{k=0}^{n}\binom{n}{k}_M2^kB_{k,M}B_{n-k,M}
&= 2^nB_{n,M}-2^{n-1}M_nB_{n-1,M}-2M_nB_{n,M}\\
&= -2(M_{n-1}B_{n,M}+2^{n-2}M_nB_{n-1,M})
\end{align*}
for $n \geq 1.$
\end{proof}
\begin{theorem}
For $n \geq 1$ the Mersenne-Euler numbers satisfy 
\begin{equation*}
\sum_{k=0}^{n}\binom{n}{k}_M2^kE_{k,M}E_{n-k,M}\ = 2(2^nE_{n,M}+E_{n+1,M}).
\end{equation*}
\end{theorem}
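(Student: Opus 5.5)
The plan is to copy the proof of the preceding theorem for Mersenne-Bernoulli numbers: find a functional identity relating $g(t)g(2t)$ to $g(2t)$ and $\mathcal{D}^t(g(t))$, then compare coefficients of $t^n/M_n!$. Here
\[
g(t)=\frac{2}{e^t_M+1}=\sum_{n=0}^{\infty}E_{n,M}\frac{t^n}{M_n!}.
\]

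First, I would compute $\mathcal{D}^t(g(t))$ using the quotient rule for $\mathcal{D}^t$ together with property iii (with $x=1$), which gives $\mathcal{D}^t(e^t_M)=e^t_M$. This yields
\[
\mathcal{D}^t(g(t)) = \frac{-2e^t_M}{(e^t_M+1)(e^{2t}_M+1)}.
\]
Second, I note that
\[
g(t)g(2t)=\frac{4}{(e^t_M+1)(e^{2t}_M+1)},
\]
so $\mathcal{D}^t(g(t)) = -\tfrac12 e^t_M\, g(t)g(2t)$.

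The key step, which plays the role of \eqref{eq:CBN}, is to eliminate $e^t_M$. Since $e^t_M = \frac{2}{g(t)}-1$, we get
\[
\mathcal{D}^t(g(t)) = -\tfrac12\Bigl(\tfrac{2}{g(t)}-1\Bigr)g(t)g(2t) = -g(2t)+\tfrac12 g(t)g(2t).
\]
Hence
\[
g(t)g(2t) = 2g(2t)+2\mathcal{D}^t(g(t)).
\]
Finding this rearrangement is the only non-routine point. Everything else is bookkeeping.

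Finally, I would expand both sides. The left side is the product of $\sum E_{n,M}t^n/M_n!$ and $\sum 2^lE_{l,M}t^l/M_l!$. By the product rule for $M$-exponential series, its $n$th coefficient is $\sum_{k=0}^n\binom{n}{k}_M2^kE_{k,M}E_{n-k,M}$. On the right, $g(2t)$ contributes $2^nE_{n,M}$. For $\mathcal{D}^t(g(t))$, property ii applied termwise gives $\mathcal{D}^t(t^n/M_n!)=t^{n-1}/M_{n-1}!$, so $\mathcal{D}^t(g(t))=\sum_{n\ge0}E_{n+1,M}t^n/M_n!$. Comparing coefficients gives exactly $2(2^nE_{n,M}+E_{n+1,M})$.

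This argument actually holds for all $n\ge 0$. As a check at $n=0$: the left side is $1$, and the right side is $2(1+E_{1,M})=2\cdot\tfrac12=1$.
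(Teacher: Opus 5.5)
Your proposal is correct and follows the paper's proof: the paper uses the same $g(t)=2/(e^t_M+1)$, the same identity $g(t)g(2t)=2\bigl(g(2t)+\mathcal{D}^t(g(t))\bigr)$, and the same coefficient comparison. Your substitution $e^t_M = 2/g(t)-1$ derives that identity, which the paper only asserts, and you are right that the identity also holds for $n=0$.
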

\begin{proof}
Let $g(t) = \frac{2}{e^t_M+1},$ then
\[g(t)g(2t) = \frac{4}{(e^t_M+1)(e^{2t}_M+1)}\]
and
\[\mathcal{D}^t(g(t)) = \frac{-2e^t_M}{(e^t_M+1)(e^{2t}_M+1)}.\]
Further, we note that
\begin{equation} \label{eq:CEN}
g(t)g(2t) = 2(g(2t) + \mathcal{D}^t(g(t))).
\end{equation}
Now,
\[g(t)g(2t) = \sum_{n=0}^{\infty}\left(\sum_{k=0}^{n}\binom{n}{k}_M2^kE_{k,M}E_{n-k,M}\right)\frac{t^n}{M_n!}\]
and
\begin{align*}
g(t)+\mathcal{D}^t(g(t)) &= \sum_{n=0}^{\infty}2^nE_{n,M}\frac{t^n}{M_n!}+\sum_{n=0}^{\infty}E_{n,M}\mathcal{D}^t\left(\frac{t^n}{M_n!}\right)\\
&= \sum_{n=0}^{\infty}2^nE_{n,M}\frac{t^n}{M_n!}+\sum_{n=1}^{\infty}E_{n,M}\frac{t^{n-1}}{M_{n-1}!}\\
&= \sum_{n=0}^{\infty}(2^nE_{n,M}+E_{n+1,M})\frac{t^n}{M_n!}.
\end{align*}
Hence, by \eqref{eq:CEN} we prove the theorem.
\end{proof}
Now, if $\mathcal{D}^x(f(x)) = F(x)$ we call $f(x)$ an anti Mersenne-derivative of $F(x)$ and define the \textbf{Mersenne-integral} as
\[\int_{a}^{b} F(x)~ d_M(x) = f(b)-f(a).\]
Moreover, for any constants $\alpha, \beta$ and functions $F(x) = \mathcal{D}^x(f(x)),~G(x) = \mathcal{D}^x(g(x))$ we know that
$\mathcal{D}^x(\alpha f(x)+\beta g(x)) = \alpha F(x) + \beta G(x).$ Therefore,
\[\int_{a}^{b} (\alpha F(x) + \beta G(x))~d_M(x) = \alpha(f(b)-f(a))+\beta(g(b)-g(a)).\]
Moreover,
\[\int_{a}^{b} \mathcal{D}^{x}(f(x)g(x))~d_M(x) = f(b)g(b)-f(a)g(a)\]
whereas 
\[\int_{a}^{b} \mathcal{D}^{x}(f(x)g(x))~d_M(x) = 
\int_{a}^{b} g(2x)\mathcal{D}^{x} (f(x)) ~d_M(x) +
\int_{a}^{b} f(x)\mathcal{D}^{x} (g(x))~d_M(x).\]
Thus, we obtain integration by parts like result as
\begin{equation}
\int_{a}^{b} f(x)\mathcal{D}^{x} (g(x))~d_M(x) = f(b)g(b)-f(a)g(a)-\int_{a}^{b} \mathcal{D}^{x} (f(x)) g(2x)~d_M(x)
\end{equation}
and 
\begin{equation} \label{eq:Int}
\int_{a}^{b} g(2x)\mathcal{D}^{x} (f(x)) ~d_M(x) = f(b)g(b)-f(a)g(a)-\int_{a}^{b} f(x)\mathcal{D}^{x} (g(x))~d_M(x).
\end{equation}
As an example we note that 
\begin{equation*}
\int_{0}^{1} x^n~ d_M(x) = \frac{1}{M_{n+1}}
\end{equation*}
for any non-negative $n.$ Further, the Mersenne-integrals of the Mersenne-Bernoulli and Mersenne-Euler polynomials are stated as the next result.
\begin{proposition}
For $n \geq 1$ the Mersenne-Bernoulli and Mersenne-Euler polynomials satisfy
\begin{equation*}
\int_{0}^{1} B_{n,M}(x)~d_M(x) = 0
\end{equation*}
and
\begin{equation*}
\int_{0}^{1} E_{n,M}(x)~d_M(x) = \frac{-2E_{n+1,M}}{M_{n+1}}.
\end{equation*}
\end{proposition}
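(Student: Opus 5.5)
The plan is to produce an explicit anti Mersenne-derivative of each integrand using Proposition \ref{prop:2_9}, and then read off the value at the endpoints $0$ and $1$ from the identities \eqref{eq:Bn1} and \eqref{eq:En1}.

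For the Bernoulli part, apply Proposition \ref{prop:2_9} with $n+1$ in place of $n$ (valid since $n+1\geq 1$). This gives
\[\mathcal{D}^x\!\left(\frac{B_{n+1,M}(x)}{M_{n+1}}\right) = B_{n,M}(x),\]
so $B_{n+1,M}(x)/M_{n+1}$ is an anti Mersenne-derivative of $B_{n,M}(x)$. By the definition of the Mersenne-integral,
\[\int_0^1 B_{n,M}(x)\,d_M(x) = \frac{B_{n+1,M}(1)-B_{n+1,M}(0)}{M_{n+1}}.\]
Since $n\geq 1$, we have $n+1\geq 2$, so \eqref{eq:Bn1} gives $B_{n+1,M}(1)=B_{n+1,M}$. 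The difference therefore vanishes. This is exactly where the hypothesis $n\geq 1$ is used. For $n=0$ the answer would be $(B_{1,M}(1)-B_{1,M})/M_1 = 1$, which is consistent with $\int_0^1 x^0\,d_M(x)=1/M_1=1$.

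For the Euler part, the same argument with the second formula of Proposition \ref{prop:2_9} gives
\[\int_0^1 E_{n,M}(x)\,d_M(x) = \frac{E_{n+1,M}(1)-E_{n+1,M}(0)}{M_{n+1}}.\]
Then \eqref{eq:En1} gives $E_{n+1,M}(1)=-E_{n+1,M}$, so the right-hand side equals $-2E_{n+1,M}/M_{n+1}$, as claimed.

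The one delicate point is well-definedness: the Mersenne-integral is defined through an arbitrary anti Mersenne-derivative, so I must justify that the value $f(1)-f(0)$ does not depend on which antiderivative is chosen. The paper does not address this, and I would handle it within polynomials, which is all that is needed here. By item ii, $\mathcal{D}^x$ sends $x^k$ to $M_kx^{k-1}$, and $M_k\neq 0$ for $k\geq 1$. Hence if a polynomial $p$ satisfies $\mathcal{D}^x(p)=0$, every coefficient of $x^k$ with $k\geq 1$ must be zero, so $p$ is constant. Two polynomial antiderivatives of the same polynomial therefore differ by a constant, and the difference $f(1)-f(0)$ is the same for both. Beyond this, I expect only routine bookkeeping of the index shift.
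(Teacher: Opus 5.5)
Your proof is correct and is the same as the paper's, which just cites \eqref{eq:Bn1}, \eqref{eq:En1} and Proposition~\ref{prop:2_9}: take $B_{n+1,M}(x)/M_{n+1}$ and $E_{n+1,M}(x)/M_{n+1}$ as anti Mersenne-derivatives and evaluate them at $1$ and $0$. Your check that polynomial anti Mersenne-derivatives are unique up to an additive constant fills a well-definedness gap the paper leaves open; also, you only invoke \eqref{eq:En1} at index $n+1\geq 1$, which matters because that identity fails at index $0$, where $E_{0,M}(1)=1\neq -E_{0,M}$.
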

\begin{proof}
The above can be proved using \eqref{eq:Bn1},\eqref{eq:En1}, Proposition \ref{prop:2_9}.
\end{proof}
For the next two results we use \eqref{eq:Int}.
\begin{theorem}
For $m,n\geq 1$
\begin{equation*}
\int_{0}^{1}x^nB_{m,M}(x)~d_M(x) = \frac{1}{2^n}\sum_{k=1}^{n}\frac{(-1)^{k-1}\binom{n+1}{k}_M}{M_{n+1}\binom{m+k}{k}_M}B_{m+k,M}.
\end{equation*}
\end{theorem}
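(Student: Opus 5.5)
The plan is to lower the power of $x$ one step at a time using the integration-by-parts identity \eqref{eq:Int}, while raising the index of the Mersenne-Bernoulli polynomial, until the previous proposition $\int_0^1 B_{j,M}(x)\,d_M(x)=0$ for $j\ge 1$ removes the remaining integral.

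\textbf{Step 1 (anti Mersenne-derivative).} By Proposition \ref{prop:2_9}, $f(x)=\frac{B_{m+1,M}(x)}{M_{m+1}}$ satisfies $\mathcal{D}^x(f(x))=B_{m,M}(x)$.

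\textbf{Step 2 (one integration by parts).} Since \eqref{eq:Int} carries the factor $g(2x)$, I write $x^n=g(2x)$ with $g(x)=x^n/2^n$, so that $\mathcal{D}^x(g(x))=\frac{M_n}{2^n}x^{n-1}$. Put
\[
I(n,m)=\int_0^1 x^nB_{m,M}(x)\,d_M(x).
\]
For $n\ge1$ we have $g(0)=0$. Since $m+1\ge 2$, \eqref{eq:Bn1} gives $B_{m+1,M}(1)=B_{m+1,M}$. Hence \eqref{eq:Int} yields the recursion
\[
I(n,m)=\frac{B_{m+1,M}}{2^nM_{m+1}}-\frac{M_n}{2^nM_{m+1}}\,I(n-1,m+1).
\]

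\textbf{Step 3 (iteration).} I apply the recursion $n$ times. The chain stops at $I(0,m+n)=\int_0^1B_{m+n,M}(x)\,d_M(x)$, which is $0$ by the previous proposition. This is why the final sum runs only over $k=1,\dots,n$ and has no extra boundary term. The $k$-th term carries the sign $(-1)^{k-1}$, the value $B_{m+k,M}$, and a coefficient built from the products
\[
\frac{M_nM_{n-1}\cdots M_{n-k+2}}{M_{m+1}M_{m+2}\cdots M_{m+k}}.
\]
I will rewrite these coefficients in $M$-factorial form:
\[
M_nM_{n-1}\cdots M_{n-k+2}=\frac{M_n!}{M_{n-k+1}!},\qquad \frac{1}{M_{m+1}\cdots M_{m+k}}=\frac{1}{M_k!\binom{m+k}{k}_M}.
\]
The goal is to combine these into $\frac{\binom{n+1}{k}_M}{M_{n+1}\binom{m+k}{k}_M}$, using $\frac{M_{n+1}!}{M_k!\,M_{n+1-k}!}=\binom{n+1}{k}_M$ and absorbing one factor $M_{n+1}$.

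\textbf{Main obstacle.} I expect the main difficulty to be the bookkeeping of the powers of $2$ and the exact $M$-factorial normalisation. Each step of the recursion rescales by $2^{-j}$ for the current exponent $j$, because the factor $g(2x)$ in \eqref{eq:Int} forces the substitution $x^j=g(2x)$. A naive iteration therefore produces $2^{-(n+(n-1)+\cdots+(n-k+1))}$ rather than a single overall factor $2^{-n}$. To handle this, I would first try putting the rescaling on the other factor: use the form of integration by parts in which $f(x)$ multiplies $\mathcal{D}^x(g(x))$, with $g$ the anti-derivative of $x^n$, namely $x^{n+1}/M_{n+1}$. This explains the $\binom{n+1}{k}_M/M_{n+1}$ in the statement. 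The cost is that the Bernoulli factor then appears evaluated at $2x$, and I would expand $B_{m,M}(2x)$ via Theorem \ref{th: 3_1} so that the only surviving dilation is the single overall factor $2^{-n}$. In either version, I will check the resulting closed form against small cases ($n=1,2$, using the listed values of $B_{j,M}$) before finalising the normalising constants. If the stated normalisation still does not match, the recursion in Step 2 itself is sound, so the fallback is to record the iterated sum in whatever closed form the recursion actually produces.
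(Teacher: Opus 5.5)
Your Steps 1--3 follow the paper's argument exactly. You use the same choice $f=B_{m+1,M}/M_{m+1}$ and $g(x)=x^n/2^n$ in \eqref{eq:Int}, the same recursion $I(n,m)=\frac{B_{m+1,M}}{2^nM_{m+1}}-\frac{M_n}{2^nM_{m+1}}I(n-1,m+1)$, and the same $M$-factorial rewriting. You stop at $I(0,m+n)=0$ while the paper stops at $I_{m+n-1,1}$; that difference is immaterial.

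The gap is in your ``Main obstacle'' plan. No rearrangement can turn the $k$-dependent power of $2$ into a single factor $2^{-n}$, because the identity as stated is false for $n\ge 2$. Take $n=2$, $m=1$, with $B_{2,M}=\frac{4}{21}$ and $B_{3,M}=-\frac{8}{45}$. Since $x^2B_{1,M}(x)=x^3-\frac13x^2$, the left side is $\frac{1}{M_4}-\frac{1}{3M_3}=\frac1{15}-\frac1{21}=\frac{2}{105}$. The stated right side is $\frac14\bigl(\frac{7}{7\cdot 3}\cdot\frac{4}{21}+\frac17\cdot\frac{8}{45}\bigr)=\frac{1}{45}$. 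Your proposed detour cannot rescue this either. In $\int_0^1 f\,\mathcal{D}^x(g)\,d_M(x)=f(1)g(1)-f(0)g(0)-\int_0^1\mathcal{D}^x(f)\,g(2x)\,d_M(x)$ with $g=x^{n+1}/M_{n+1}$ and $f=B_{m,M}$, the dilation falls on the power (producing $2^{n+1}$), not on the Bernoulli factor. The Bernoulli index also decreases, so you would get an expansion in the $B_{m-j,M}$ rather than the $B_{m+k,M}$.

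What your iteration actually proves is the following. It is also what the paper's own proof ends with, since $\frac{n(n+1)-(n-k)(n-k+1)}{2}=nk-\frac{k(k-1)}{2}=n+(n-1)+\cdots+(n-k+1)$:
\[
\int_0^1 x^nB_{m,M}(x)\,d_M(x)=\frac{1}{M_{n+1}}\sum_{k=1}^{n}\frac{(-1)^{k-1}\binom{n+1}{k}_M}{2^{nk-k(k-1)/2}\binom{m+k}{k}_M}B_{m+k,M}.
\]
This gives $\frac{2}{105}$ in the example above. Its $k$-th term matches the stated one only for $k=1$, so the two formulas agree only when $n=1$. Your ``fallback'' is therefore the correct conclusion, not a last resort. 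Commit to it, and say explicitly that the single factor $\frac{1}{2^n}$ in the statement is an error: it contradicts the paper's own final displayed line. It is not something you can prove.
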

\begin{proof}
Let $I_{m,n} = \int_{0}^{1}x^nB_{m,M}(x)~d_M(x).$  First we note that
\begin{align*} \label{eq:BI}
I_{m,1} &= \int_{0}^{1}xB_{m,M}(x)~d_M(x)\notag\\ 
&= \frac{1}{2} \int_{0}^{1}2x\mathcal{D}^x\left(\frac{B_{m+1,M}(x)}{M_{m+1}}\right)~d_M(x)\notag\\
&= \frac{B_{m+1,M}}{2M_{m+1}}.
\end{align*}
Then, we see that
\begin{align*}
I_{m,n} &= \frac{1}{2^n}\int_{0}^{1}(2x)^nB_{m,M}(x)~d_M(x)\\
&= \frac{1}{2^n}\int_{0}^{1}(2x)^n\mathcal{D}^x\left(\frac{B_{m+1,M}(x)}{M_{m+1}}\right)~d_M(x)\\
&= \frac{1}{2^n}\left(\frac{B_{m+1,M}}{M_{m+1}}-\frac{M_n}{M_{m+1}}\int_{0}^{1}x^{n-1}B_{m+1,M}(x)~d_M(x)\right)\\
&= \frac{1}{2^n}\left(\frac{B_{m+1,M}}{M_{m+1}}-\frac{M_n}{M_{m+1}}I_{m+1,n-1}\right)\\
&= \frac{1}{2^n}\left(\frac{B_{m+1,M}}{M_{m+1}}-\frac{1}{2^{n-1}}\left(\frac{M_nB_{m+2,M}}{M_{m+1}M_{m+2}}+\frac{M_{n}M_{n-1}}{M_{m+1}M_{m+2}}I_{m+2,n-2}\right)\right).
\end{align*}
Continuing this way we obtain
\begin{align*}
I_{m,n} &= \frac{B_{m+1,M}}{2^nM_{m+1}}-\frac{M_nB_{m+2,M}}{2^{2n-1}M_{m+1}M_{m+2}}+\dots+\frac{(-1)^{n-1}}{2^{\frac{n(n+1)}{2}-1}}\left(\prod_{j=1}^{n-1}\frac{M_{j+1}}{M_{m+j}}\right)I_{m+n-1,1}\\
&= \frac{B_{m+1,M}}{2^nM_{m+1}}-\frac{M_nB_{m+2,M}}{2^{2n-1}M_{m+1}M_{m+2}}+\dot+\frac{(-1)^{n-1}}{2^{\frac{n(n+1)}{2}}}\prod_{j=1}^{n}\frac{M_{j+1}}{M_{m+j}}\\
&= \frac{1}{M_{n+1}}\sum_{k=1}^{n}\frac{(-1)^{k-1}\binom{n+1}{k}_M}{2^{\frac{n(n+1)-(n-k)(n-k+1)}{2}}\binom{m+k}{k}_M}B_{m+k,M}.
\end{align*}
\end{proof}
\begin{theorem}
If $m,n \geq 1,$ we have
\begin{align*}
\int_{0}^{1}x^nE_{m,M}(x)~d_M(x) = \frac{1}{M_{n+1}}&\sum_{k=1}^{n}\frac{(-1)^{k}\binom{n+1}{k}_M}{2^{\frac{n(n+1)-(n-k)(n-k+1)}{2}}\binom{m+k}{k}_M}E_{m+k,M}\notag\\&
+\frac{(-1)^{n-1}}{2^{\frac{n(n+1)}{2}}}\frac{E_{m+n+1}}{M_{m+n+1}\binom{m+n}{n}_M}.
\end{align*}
\end{theorem}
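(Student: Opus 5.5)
The plan is to repeat the argument used for the Bernoulli analogue just above, with the Euler boundary values $E_{n,M}(1)=-E_{n,M}$ from \eqref{eq:En1} in place of \eqref{eq:Bn1}. Set $J_{m,n}=\int_0^1 x^nE_{m,M}(x)\,d_M(x)$. By Proposition \ref{prop:2_9}, $E_{m,M}(x)=\mathcal{D}^x\bigl(E_{m+1,M}(x)/M_{m+1}\bigr)$, so $E_{m+1,M}(x)/M_{m+1}$ is an anti Mersenne-derivative of $E_{m,M}(x)$. I would write $x^n=2^{-n}(2x)^n$, which puts the integrand in the form $g(2x)\mathcal{D}^x(f(x))$ with $g(x)=x^n$ and $f(x)=E_{m+1,M}(x)/M_{m+1}$. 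Then \eqref{eq:Int} applies directly.

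\textbf{Reduction step.} Using $\mathcal{D}^x(x^n)=M_nx^{n-1}$, the boundary term is $f(1)g(1)-f(0)g(0)$. Since $g(0)=0$ for $n\ge1$ and $E_{m+1,M}(1)=-E_{m+1,M}$, this gives
\[
J_{m,n}=\frac{1}{2^n}\left(-\frac{E_{m+1,M}}{M_{m+1}}-\frac{M_n}{M_{m+1}}J_{m+1,n-1}\right).
\]

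\textbf{Base case $n=0$.} Here $g\equiv1$, so there is no $2^{-n}$ rescaling and $g(0)\neq0$. The integral is simply $f(1)-f(0)=-2E_{m+1,M}/M_{m+1}$, which is the Euler case of the earlier proposition. This is where the factor $2$ appears, and it produces the separate final term in the statement.

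\textbf{Unrolling.} I would iterate the recursion $n$ times, down to $J_{m+n,0}$. The $k$-th term picks up:
\begin{itemize}
\item a sign $(-1)^{k}$;
\item the product $\frac{M_nM_{n-1}\cdots M_{n-k+2}}{M_{m+1}\cdots M_{m+k}}$;
\item the power $2^{-(n+(n-1)+\dots+(n-k+1))}=2^{-\frac{n(n+1)-(n-k)(n-k+1)}{2}}$.
\end{itemize}
To put the products in the stated form I would use $M_n\cdots M_{n-k+2}=\frac{M_{n+1}!}{M_{n+1}\,M_{n-k+1}!}$ and $M_{m+1}\cdots M_{m+k}=\frac{M_{m+k}!}{M_m!}$. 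These give
\[
\frac{M_n\cdots M_{n-k+2}}{M_{m+1}\cdots M_{m+k}}=\frac{\binom{n+1}{k}_M}{M_{n+1}\binom{m+k}{k}_M},
\]
exactly as in the Bernoulli theorem. The tail is $(\text{sign})\,2^{-n(n+1)/2}\,\frac{M_n!}{M_{m+1}\cdots M_{m+n}}\,J_{m+n,0}$, and substituting $J_{m+n,0}=-2E_{m+n+1,M}/M_{m+n+1}$ yields the extra term with $\binom{m+n}{n}_M$.

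\textbf{Main obstacle.} The hard part is bookkeeping rather than ideas. The points that need care are:
\begin{itemize}
\item the sign and power of $2$ on the final term, which interact with the factor $-2$ from $J_{\cdot,0}$ and with the different treatment of the $n=0$ step;
\item the $M$-binomial identification, including that $\binom{m+n}{n}_M=\frac{M_{m+n}!}{M_n!\,M_m!}$.
\end{itemize}
I would first check the formula for $n=1$ by hand. Then I would prove the general case by induction on $n$ using the recursion, which avoids the ellipsis argument altogether. The stated exponents or signs may need minor adjustment, as happened in the Bernoulli case.
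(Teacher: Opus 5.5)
Your recursion
\[
J_{m,n}=\frac{1}{2^n}\Bigl(-\frac{E_{m+1,M}}{M_{m+1}}-\frac{M_n}{M_{m+1}}J_{m+1,n-1}\Bigr),\qquad J_{m,0}=-\frac{2E_{m+1,M}}{M_{m+1}},
\]
is correct. So is your identification of the $k$-th coefficient as $\binom{n+1}{k}_M/\bigl(M_{n+1}\binom{m+k}{k}_M\bigr)$ times $2^{-(nk-k(k-1)/2)}$, and this is the paper's argument. (The paper stops unrolling at $J_{m+n-1,1}$, which it computes separately; that makes no difference.)

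The gap is your claim that substituting $J_{m+n,0}$ ``yields the extra term''. It does not. The tail is
\[
(-1)^n\,2^{-\frac{n(n+1)}{2}}\,\frac{1}{\binom{m+n}{n}_M}\cdot\Bigl(-\frac{2E_{m+n+1,M}}{M_{m+n+1}}\Bigr)=\frac{(-1)^{n-1}}{2^{\frac{n(n+1)}{2}-1}}\,\frac{E_{m+n+1,M}}{M_{m+n+1}\binom{m+n}{n}_M},
\]
which is twice the last term of the statement. The factor $2$ you traced to $J_{\cdot,0}$ is never cancelled. So the theorem as printed is false, and no bookkeeping will prove it.

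Take $m=n=1$, where $E_{1,M}(x)=x-\tfrac12$.
\begin{itemize}
\item The left side is $\int_0^1(x^2-\tfrac12x)\,d_M(x)=\tfrac17-\tfrac16=-\tfrac1{42}$.
\item Your recursion gives the same value: $-\tfrac{E_{2,M}}{2M_2}+\tfrac{E_{3,M}}{M_2M_3}=-\tfrac1{24}+\tfrac1{56}=-\tfrac1{42}$.
\item The stated right side is $-\tfrac1{24}+\tfrac1{112}=-\tfrac{11}{336}$.
\end{itemize}

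The paper's proof makes the same slip. The coefficient in front of $J_{m+n-1,1}$ is $2^{-\frac{n(n+1)}{2}+1}$. When $J_{m+n-1,1}=-\frac{E_{m+n,M}}{2M_{m+n}}+\frac{E_{m+n+1,M}}{M_{m+n}M_{m+n+1}}$ is substituted, this becomes $2^{-\frac{n(n+1)}{2}}$ on the $E_{m+n+1,M}$ term, where it should stay $2^{-\frac{n(n+1)}{2}+1}$.

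You anticipated that ``exponents or signs may need minor adjustment'' and planned the $n=1$ check, which is exactly what exposes this. But a proof should commit to the outcome. Your argument establishes the identity with $2^{\frac{n(n+1)}{2}-1}$ in the final denominator, and with $E_{m+n+1,M}$ in place of the paper's $E_{m+n+1}$. You should state that identity and say that the printed version fails.
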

\begin{proof}
We follow the similar method as the previous theorem. Let $J_{m,n} = \int_{0}^{1}x^nE_{m,M}(x)~d_M(x).$ Then we observe that
\begin{align*} 
J_{m,1} &= \int_{0}^{1}xE_{m,M}(x)~d_M(x)\notag\\ 
&= \frac{1}{2} \int_{0}^{1}2x\mathcal{D}^x\left(\frac{E_{m+1,M}(x)}{M_{m+1}}\right)~d_M(x)\notag\\
&= \frac{1}{2}\left(\frac{E_{m+1,M}}{M_{m+1}}-\frac{1}{M_{m+1}}\int_{0}^{1} E_{m+1,M}(x)~d_M(x)\right)\notag\\
&= \frac{E_{m+1,M}(1)}{2M_{m+1}}+\frac{E_{m+2,M}}{M_{m+1}M_{m+2}} \notag\\
&= \frac{-E_{m+1,M}}{2M_{m+1}}+\frac{E_{m+2,M}}{M_{m+1}M_{m+2}}.
\end{align*}
Therefore,
\begin{align*}
J_{m,n} &= \frac{1}{2^n}\int_{0}^{1} (2x)^n\mathcal{D}^x\left(\frac{E_{m+1,M}}{M_{m+1}}\right)\\
&= \frac{1}{2^n}\left(-\frac{E_{m+1,M}}{M_{m+1}}-\frac{M_n}{M_{m+1}}J_{m+1,n-1}\right)\\
&=\frac{-E_{m+1,M}}{2^nM_{m+1}}+\frac{M_nE_{m+2,M}}{2^{2n-1}M_{m+1}M_{m+2}}+\dots+\frac{(-1)^{n-1}}{2^{\frac{n(n+1)}{2}-1}}\left(\prod_{j=1}^{n-1}\frac{M_{j+1}}{M_{m+j}}\right)J_{m+n-1,1}\\
&= \frac{1}{M_{n+1}}\sum_{k=1}^{n}\frac{(-1)^{k}\binom{n+1}{k}_M}{2^{\frac{n(n+1)-(n-k)(n-k+1)}{2}}\binom{m+k}{k}_M}E_{m+k,M}+\frac{(-1)^{n-1}}{2^{\frac{n(n+1)}{2}}}\left(\prod_{j=1}^{n}\frac{M_{j+1}}{M_{m+j}}\right)\frac{E_{m+n+1,M}}{M_{m+n+1}}\\
&=  \frac{1}{M_{n+1}}\sum_{k=1}^{n}\frac{(-1)^{k}\binom{n+1}{k}_M}{2^{\frac{n(n+1)-(n-k)(n-k+1)}{2}}\binom{m+k}{k}_M}E_{m+k,M}+\frac{(-1)^{n-1}}{2^{\frac{n(n+1)}{2}}}\frac{E_{m+n+1}}{M_{m+n+1}\binom{m+n}{n}_M}.
\end{align*}
\end{proof}
The following two are miscellaneous results which we obtain by observing some pattern which are proved using \eqref{eq: MB} and induction on $n.$
\begin{theorem}
If the sequence $\{C_n\}_{n \geq 0}$ is defined as 
$C_n = -\frac{1}{M_n}\sum_{k=0}^{n-1}\binom{n+1}{k}_MC_k$
with $C_0 = 1,$ then
$C_n = (-1)^n 2^{\frac{n(n-1)}{2}}.$
\end{theorem}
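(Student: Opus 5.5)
We argue by strong induction on $n$. The defining recursion determines $C_n$ uniquely from $C_0,\dots,C_{n-1}$. So it suffices to check that the candidate $c_k=(-1)^k2^{k(k-1)/2}$ satisfies the recursion, given that it does for all smaller indices. Since $c_0=1=C_0$, and the recursion for $C_n$ involves only $C_0,\dots,C_{n-1}$, everything reduces to the single identity
\begin{equation*}
\sum_{k=0}^{n-1}\binom{n+1}{k}_M(-1)^k2^{\frac{k(k-1)}{2}} = (-1)^{n+1}2^{\frac{n(n-1)}{2}}M_n, \qquad n\ge 1 .
\end{equation*}
(Sanity check: for $n=1$ the left side is $1$ and the right side is $M_1=1$. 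For $n=2$ it reads $1-7=-6=-2\cdot 3$.)

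The key tool is a Mersenne analogue of the Gauss binomial theorem, namely
\begin{equation*}
P_N(x):=\prod_{j=0}^{N-1}(1+2^jx)=\sum_{k=0}^{N}\binom{N}{k}_M2^{\frac{k(k-1)}{2}}x^k .
\end{equation*}
I would prove this by induction on $N$ using the recurrence \eqref{eq: MB}. Write $P_N(x)=P_{N-1}(2x)(1+x)$, obtained by pulling out the $j=0$ factor and shifting the index. The coefficient of $x^k$ on the right is then
\begin{equation*}
2^k2^{\frac{k(k-1)}{2}}\binom{N-1}{k}_M+2^{k-1}2^{\frac{(k-1)(k-2)}{2}}\binom{N-1}{k-1}_M .
\end{equation*}
Factoring out $2^{\frac{k(k-1)}{2}}$ (note $2^{k-1}2^{(k-1)(k-2)/2}=2^{k(k-1)/2}$) leaves exactly $2^k\binom{N-1}{k}_M+\binom{N-1}{k-1}_M=\binom{N}{k}_M$. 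This bookkeeping of powers of $2$ is the only delicate point, and I expect it to be the main (mild) obstacle: one must pick the decomposition $P_N(x)=P_{N-1}(2x)(1+x)$ rather than $P_{N-1}(x)(1+2^{N-1}x)$, so that it matches the form $2^k\binom{n-1}{k}_M+\binom{n-1}{k-1}_M$ of \eqref{eq: MB}.

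Now put $x=-1$ and $N=n+1\ge 2$. The $j=0$ factor $1+x$ vanishes, so
\begin{equation*}
\sum_{k=0}^{n+1}\binom{n+1}{k}_M(-1)^k2^{\frac{k(k-1)}{2}}=0 .
\end{equation*}
Separate the last two terms. Use $\binom{n+1}{n}_M=M_{n+1}$ for $k=n$ and $\binom{n+1}{n+1}_M=1$ for $k=n+1$, which give $(-1)^n2^{\frac{n(n-1)}{2}}M_{n+1}$ and $(-1)^{n+1}2^{\frac{n(n+1)}{2}}$ respectively. This yields
\begin{equation*}
\sum_{k=0}^{n-1}\binom{n+1}{k}_M(-1)^k2^{\frac{k(k-1)}{2}}=(-1)^{n+1}2^{\frac{n(n-1)}{2}}\bigl(M_{n+1}-2^n\bigr).
\end{equation*}
Since $M_{n+1}-2^n=2^n-1=M_n$, this is the required identity. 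Substituting into the recursion gives
\begin{equation*}
C_n=-\frac{1}{M_n}(-1)^{n+1}2^{\frac{n(n-1)}{2}}M_n=(-1)^n2^{\frac{n(n-1)}{2}},
\end{equation*}
which completes the induction.
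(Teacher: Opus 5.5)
Your proof is correct, but it reaches the key identity by a different route from the paper.

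The paper proves no product formula. It applies \eqref{eq: MB} once to each $\binom{n+1}{k}_M$ in the truncated sum and substitutes the inductive values. It then uses $2^kC_k=-C_{k+1}$ (valid for $k\le n-2$ by hypothesis), so that $\sum_{k=0}^{n-1}2^k\binom{n}{k}_MC_k$ and $\sum_{k=0}^{n-2}\binom{n}{k}_MC_{k+1}$ cancel term by term. Only $\binom{n}{n-1}_M2^{n-1}C_{n-1}=(-1)^{n-1}M_n2^{n(n-1)/2}=-M_nC_n$ survives.

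You instead prove the Mersenne (Gaussian, $q=2$) binomial theorem $\prod_{j=0}^{N-1}(1+2^jx)=\sum_{k}\binom{N}{k}_M2^{k(k-1)/2}x^k$ by induction on $N$. You then evaluate at $x=-1$ and subtract the two top terms, using $M_{n+1}-2^n=M_n$.

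The paper's argument is shorter and needs no auxiliary lemma. Yours explains where the identity comes from: the alternating sum is $P_{n+1}(-1)$, which the factor $1+x$ annihilates. The same lemma with $N=n$ also gives the companion theorem on $D_n$ for free.

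Note, though, that the vanishing at $x=-1$ alone does not need the full product formula or the induction on $N$. With $c_k=(-1)^k2^{k(k-1)/2}$, a single application of \eqref{eq: MB} gives $\sum_{k=0}^{N}\binom{N}{k}_Mc_k=\sum_{j=0}^{N-1}\binom{N-1}{j}_M(2^jc_j+c_{j+1})=0$. This is exactly the cancellation the paper exploits.
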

\begin{proof}
We see that $n=0$ satisfies the above easily, hence assume the same for any non-negative integer strictly less than $n .$ The result follows by
\begin{align*}
-M_{n}C_n &= 1+ \sum_{k=1}^{n-1}\left(2^k\binom{n}{k}_M+\binom{n}{k-1}_M\right)C_k\\
&= \sum_{k=0}^{n-1}2^k\binom{n}{k}_MC_k +\sum_{k=1}^{n-1}\binom{n}{k-1}_MC_k\\
&= \sum_{k=0}^{n-1}2^k\binom{n}{k}_MC_k + \sum_{k=0}^{n-2}\binom{n}{k}_MC_{k+1}\\
&= \sum_{k=0}^{n-1}\binom{n}{k}_M (-1)^k ~2^{\frac{k(k+1)}{2}} + \sum_{k=0}^{n-2}\binom{n}{k}_M (-1)^{k+1}2^{\frac{k(k+1)}{2}}\\
&= (-1)^{n-1}M_{n}2^{\frac{n(n-1)}{2}}.
\end{align*}
\end{proof}
\begin{theorem}
If $\{D_n\}_{n\geq 0}$ is a sequence with 
$D_n = -\sum_{k=0}^{n-1}\binom{n}{k}_MD_k$
with $D_0 = 1,$
then $D_n = (-1)^n2^{\frac{n(n-1)}{2}}.$
\end{theorem}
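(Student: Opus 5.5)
The plan is to rewrite the recursion as a vanishing identity and check that the proposed closed form satisfies it. Since $D_0=1$ and $D_n$ is determined by $D_0,\dots,D_{n-1}$, the sequence is unique. The recursion is equivalent to
\[\sum_{k=0}^{n}\binom{n}{k}_M D_k = 0 \qquad (n\geq 1).\]
So, by strong induction on $n$, it suffices to show that $T_k:=(-1)^k2^{\frac{k(k-1)}{2}}$ satisfies $T_0=1$ and
\[S_n:=\sum_{k=0}^{n}\binom{n}{k}_M(-1)^k2^{\frac{k(k-1)}{2}}=0 \qquad \text{for } n\ge 1.\]
Given this, the induction is immediate. If $D_k=T_k$ for all $k<n$, then
\[D_n=-\sum_{k<n}\binom{n}{k}_M T_k = T_n - S_n = T_n,\]
where we use $\binom{n}{n}_M=1$.

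To prove $S_n=0$, fix $n\ge1$ and apply the recurrence \eqref{eq: MB}, $\binom{n}{k}_M=2^k\binom{n-1}{k}_M+\binom{n-1}{k-1}_M$, to each term. This splits $S_n$ into two sums. The first is
\[\sum_{k}\binom{n-1}{k}_M(-1)^k2^k2^{\frac{k(k-1)}{2}}=\sum_k\binom{n-1}{k}_M(-1)^k2^{\frac{k(k+1)}{2}}.\]
In the second, shift $k\mapsto k+1$, which gives
\[\sum_{k}\binom{n-1}{k}_M(-1)^{k+1}2^{\frac{(k+1)k}{2}}.\]
The two sums run over the same range $0\le k\le n-1$, using $\binom{n-1}{n}_M=0$ and $\binom{n-1}{-1}_M=0$. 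They are negatives of each other term by term, so $S_n=0$.

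As a cross-check, one can note that $\binom{n}{k}_M$ is the Gaussian binomial coefficient with $q=2$, because $M_n=\frac{2^n-1}{2-1}$. The identity $S_n=0$ is then the $q$-binomial theorem
\[\prod_{j=0}^{n-1}(1+2^jx)=\sum_k 2^{\frac{k(k-1)}{2}}\binom{n}{k}_Mx^k\]
evaluated at $x=-1$, where the factor with $j=0$ vanishes. This is the same argument as in the previous theorem, and it is the route via \eqref{eq: MB} and induction indicated in the paper.

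The only delicate step is the bookkeeping of the index shift and the boundary terms. One must check that the top term $k=n$ of the first sum and the bottom term $k=0$ of the second sum vanish, which they do because $\binom{n-1}{n}_M=0$ and $\binom{n-1}{-1}_M=0$. One must also check that the exponents match: $k+\frac{k(k-1)}{2}=\frac{k(k+1)}{2}$, which is what makes the cancellation exact.
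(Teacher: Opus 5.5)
Your proof is correct and is essentially the paper's argument: induction on $n$, the recurrence \eqref{eq: MB} applied termwise, and an index shift in the second sum so that the terms $\binom{n-1}{k}_M(-1)^k2^{k(k+1)/2}$ cancel in pairs. The only difference is cosmetic: you keep the $k=n$ term and show that the full sum vanishes, whereas the paper leaves that term out and is left with the single surviving term $(-1)^{n-1}2^{n(n-1)/2}=-D_n$ (your bookkeeping also avoids the paper's typo $\binom{n-1}{k-1}_M$ in the shifted sum).
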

\begin{proof}
We see that $n = 0$ satisfies the condition, hence assume $n > 0$ and the statement is true for for all non-negative integers less than it. Now,
\begin{align*}
-D_n &= \sum_{k=0}^{n-1}\binom{n}{k}_MD_k\\
&= 1+ \sum_{k=1}^{n-1}\binom{n}{k}_MD_k\\
&= 1+ \sum_{k=1}^{n-1}2^k\binom{n-1}{k}_MD_k + \sum_{k=1}^{n-1}\binom{n-1}{k-1}_MD_k\\
&= \sum_{k=0}^{n-1}2^k\binom{n-1}{k}_MD_k + \sum_{k=0}^{n-2}\binom{n-1}{k-1}_MD_{k+1}\\
&= \sum_{k=0}^{n-1}\binom{n-1}{k}_M(-1)^k 2^{\frac{k(k+1)}{2}}+
\sum_{k=0}^{n-2}\binom{n-1}{k}_M(-1)^{k+1} 2^{\frac{k(k+1)}{2}}\\
&= (-1)^{n-1}2^{\frac{n(n-1)}{2}}.
\end{align*}
The result follows by taking negative sign on both sides.
\end{proof}
\section{Mersenne-Bernoulli and Mersenne-Euler Matrices}
In this section we introduce square matrices of order $n\geq 2$ consisting of Mersenne-Bernoulli and Mersenne-Euler polynomials and find the corresponding inverses by factorising them. We demonstrate the obtained results using suitable examples.
Firstly, let the \textbf{Mersenne-Pascal} matrix $\mathcal{M}_n(x) = (a{ij})$ for $1\leq i, j \leq n$ be defined by
\begin{equation}
a{ij} = \begin{cases}
\binom{i-1}{j-1}_Mx^{i-j}, &\text{~if~} i \geq j\\
0, &\text{~else.}
\end{cases}
\end{equation}
By Theorem 1 in \cite{KT}, it can be shown that the inverse of the Mersenne-Pascal matrix $\mathcal{M}_n(x)^{-1} = (b{ij})$ where
\begin{equation}
b{ij} = \begin{cases}
c_{i-j+1}\binom{i-1}{j-1}_Mx^{i-j}, &\text{~if~} i \geq j\\
0, &\text{~else}
\end{cases}
\end{equation}
with $c_1 = 1$ and $c_n = -\sum_{k=1}^{n-1}\binom{n-1}{k-1}_Mc_k.$
We now define the \textbf{Mersenne-Bernoulli} matrix $\mathcal{B}_n(x)=(u{ij})$ for $1 \leq i,j \leq n$ by
\begin{equation}
u{ij} = \begin{cases}
\binom{i-1}{j-1}_MB_{i-j,M}(x), &\text{~if~} i \geq j\\
0, &\text{~else}
\end{cases}
\end{equation}
and the \textbf{Mersenne-Euler} matrix $\mathcal{E}_n(x) = (v{ij})$
for $1 \leq i,j \leq n$ is stated as 
\begin{equation}
v{ij} = \begin{cases}
\binom{i-1}{j-1}_ME_{i-j,M}(x), &\text{~if~} i \geq j\\
0, &\text{~else.}
\end{cases}
\end{equation}
\begin{theorem}
Let $\mathcal{B}_n = \mathcal{B}_n(0)$ and
$\mathcal{Q}_n = (q{ij})$ be a square matrix with
\[q_{ij} = \begin{cases}
\frac{1}{M_{i-j+1}}\binom{i-1}{j-1}_M, &\text{~if~} i \geq j\\
0, &\text{~else}
\end{cases}\]
for $1 \leq i,j \leq n.$ Then $\mathcal{B}_n^{-1} = \mathcal{Q}_n.$
\end{theorem}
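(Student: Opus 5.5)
Since both $\mathcal{B}_n$ and $\mathcal{Q}_n$ are lower triangular with all diagonal entries equal to $1$ (because $B_{0,M}=1$ and $M_1=1$), it suffices to show that $\mathcal{B}_n\mathcal{Q}_n = I_n$. A one-sided inverse of a square matrix is automatically two-sided. The product is again lower triangular, so only the entries $(i,j)$ with $i\geq j$ need to be computed:
\[
(\mathcal{B}_n\mathcal{Q}_n)_{ij} = \sum_{k=j}^{i}\binom{i-1}{k-1}_M B_{i-k,M}\,\frac{1}{M_{k-j+1}}\binom{k-1}{j-1}_M .
\]

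The first step is to apply the product identity \eqref{eq: MF1}, $\binom{i-1}{k-1}_M\binom{k-1}{j-1}_M=\binom{i-1}{j-1}_M\binom{i-j}{k-j}_M$, which pulls a common factor $\binom{i-1}{j-1}_M$ out of the sum. Next, reindex with $l=k-j$ and put $n=i-j$. The entry then becomes
\[
\binom{i-1}{j-1}_M\sum_{l=0}^{n}\binom{n}{l}_M\frac{1}{M_{l+1}}B_{n-l,M}.
\]

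The inner sum is exactly the left-hand side of \eqref{BN: 1}, the specialisation at $x=0$ of the first Proposition. That identity gives the value $M_n!\,\delta_{n,0}=\delta_{n,0}$. So the entry equals $\binom{i-1}{j-1}_M\delta_{i-j,0}=\delta_{ij}$, which gives $\mathcal{B}_n\mathcal{Q}_n=I_n$ and hence $\mathcal{B}_n^{-1}=\mathcal{Q}_n$.

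The only real point of care is the validity of \eqref{BN: 1}. It should be re-derived from $\frac{e^t_M-1}{t}=\sum_{k\geq0}\frac{t^k}{M_{k+1}!}=\sum_{k\ge0}\frac{1}{M_{k+1}}\frac{t^k}{M_k!}$, multiplied by $\frac{t}{e^t_M-1}=\sum_{n\ge0} B_{n,M}\frac{t^n}{M_n!}$. The product equals $1$, and the $M$-Cauchy product rule gives the coefficient $\sum_k\binom{n}{k}_M\frac{1}{M_{k+1}}B_{n-k,M}=\delta_{n,0}$. Beyond checking this, the argument is bookkeeping: the index ranges in \eqref{MF1} require $1\le j\le k\le i$, which matches the summation range.
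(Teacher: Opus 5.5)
Your proof is correct and is essentially the paper's argument: factor out $\binom{i-1}{j-1}_M$ using the product identity for $M$-binomial coefficients, reindex, and recognize the inner sum as the $x=0$ case of the first Proposition, which gives $\delta_{i-j,0}$. The only cosmetic difference is that the paper nominally multiplies in the order $\mathcal{Q}_n\mathcal{B}_n$, although its displayed summand actually matches your $\mathcal{B}_n\mathcal{Q}_n$; your remark that a one-sided inverse of a square matrix is two-sided covers either order.
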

\begin{proof}
We shall make use of \eqref{BN: 1}. Now, suppose $\mathcal{R}_n = \mathcal{Q}_n\mathcal{B}_n = (r{ij}),$ then 
\begin{align*}
r{ij} &= \sum_{k=1}^{n}q_{ik}b_{kj} = \sum_{k=j}^{i}q_{ik}b_{kj}\\
&= \sum_{k=j}^{i} \binom{i-1}{k-1}_M \binom{k-1}{j-1}_M\frac{1}{M_{k-j+1}}B_{i-k,M}\\
&= \binom{i-1}{j-1}_M\sum_{k=j}^{i} \binom{i-j}{k-j}_M\frac{1}{M_{k-j+1}} B_{i-k,M}\\
&= \binom{i-1}{j-1}_M\sum_{k=0}^{i-j} \binom{i-j}{k}_M\frac{1}{M_{k+1}} B_{i-j-k,M}\\
&= \binom{i-1}{j-1}_M M_{i-j}!\delta_{i-j,0} = \delta_{i,j}.
\end{align*}
Therefore, $\mathcal{B}_n^{-1} = \mathcal{Q}_n.$
\end{proof}
\begin{theorem}
If $I_n$ is the identity matrix of order $n$ and $\mathcal{E}_n = \mathcal{E}_n(0)$ then $\mathcal{E}_n^{-1} = \frac{1}{2}(\mathcal{M}_n+I_n).$
\end{theorem}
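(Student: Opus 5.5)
Here $\mathcal{M}_n$ denotes the Mersenne-Pascal matrix $\mathcal{M}_n(1)$, with entries $\binom{i-1}{j-1}_M$ for $i\ge j$. The plan mirrors the proof of the preceding theorem: we compute the product $\mathcal{R}_n=\frac{1}{2}(\mathcal{M}_n+I_n)\mathcal{E}_n$ entrywise and show it equals $I_n$. Since both factors are lower triangular, $\mathcal{R}_n$ is lower triangular. It therefore suffices to check the entries with $i\ge j$. A one-sided inverse of a square matrix is automatically two-sided, so this will finish the proof.

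For $i\ge j$, write $\mathcal{R}_n=(r_{ij})$. Splitting the product gives
\[
2r_{ij}=\sum_{k=j}^{i}\binom{i-1}{k-1}_M\binom{k-1}{j-1}_M E_{k-j,M}+\binom{i-1}{j-1}_M E_{i-j,M}.
\]
The second term is the contribution of $I_n$. In the first sum we apply \eqref{eq: MF1} to rewrite $\binom{i-1}{k-1}_M\binom{k-1}{j-1}_M=\binom{i-1}{j-1}_M\binom{i-j}{k-j}_M$, and then reindex by $k\mapsto k-j$. Setting $m=i-j$, this yields
\[
2r_{ij}=\binom{i-1}{j-1}_M\left(\sum_{k=0}^{m}\binom{m}{k}_M E_{k,M}+E_{m,M}\right).
\]

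By \eqref{eq: E2}, the bracket equals $2\delta_{m,0}$. Hence $r_{ij}=\binom{i-1}{j-1}_M\delta_{i,j}=\delta_{i,j}$, so $\mathcal{R}_n=I_n$ and $\mathcal{E}_n^{-1}=\frac{1}{2}(\mathcal{M}_n+I_n)$.

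The only delicate point is confirming that \eqref{eq: E2} holds for every $m\ge 0$, including $m=0$ and $m=1$, and not only for $n\ge2$ as with \eqref{eq: E1}. To see this, set $x=0$ in $E_{n,M}(x+_M1)+E_{n,M}(x)=2x^n$, which holds for all $n\ge0$ with the convention $0^0=1$. Combining it with $E_{n,M}(1)=\sum_k\binom{n}{k}_ME_{k,M}$ from Theorem \ref{TH: BE} gives \eqref{eq: E2} for all $n\ge 0$. As a check at $m=0$: $1+1=2$.
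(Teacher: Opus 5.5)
Your proof is correct and matches the paper's argument: you compute the product entrywise, collapse the double $M$-binomial with \eqref{eq: MF1}, and conclude with \eqref{eq: E2}. The differences are minor. The paper multiplies on the right, computing $\mathcal{E}_n(\mathcal{M}_n+I_n)$, while you multiply on the left. You also keep the identity term outside the $k$-sum, which the paper's display misplaces, and you explicitly check that \eqref{eq: E2} holds for all $m\ge 0$, which the paper leaves implicit.
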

\begin{proof}
By \eqref{eq: E2} and direct calculation
\begin{align*}
\left(\mathcal{E}_n\left(\mathcal{M}_n+I_n\right)\right)_{ij} &= \sum_{k=j}^{i} \left(\binom{i-1}{k-1}_M\binom{k-1}{j-1}_ME_{i-k,M}+\binom{i-1}{j-1}_ME_{i-j,M}\right)\\
&= \binom{i-1}{j-1}_M\sum_{k=j}^{i}\left(\binom{i-j}{k-j}_ME_{i-k,M}+E_{i-j,M}\right)\\
&= \binom{i-1}{j-1}_M\sum_{k=0}^{i-j}\left(\binom{i-j}{k}_ME_{i-j-k,M}+E_{i-j,M}\right)\\
&= 2\binom{i-1}{j-1}_M\delta_{i-j,0}= 2\delta_{i,j}.
\end{align*}
\end{proof}
\begin{theorem}
The Mersenne-Bernoulli and Mersenne-Euler polynomials matrices
satisfy the product formula
\begin{equation}
\mathcal{B}_n(x+_My) = \mathcal{B}_n(x)\mathcal{M}_n(y)
\end{equation}
and 
\begin{equation}
\mathcal{E}_n(x+_My) = \mathcal{E}_n(x)\mathcal{M}_n(y).
\end{equation}
\end{theorem}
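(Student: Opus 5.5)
The plan is to compare the two sides entrywise, using the addition formula of Theorem \ref{TH: BE} together with the product identity \eqref{eq: MF1} for $M$-binomial coefficients. This is the same strategy used for the inverse of $\mathcal{B}_n$ above. All three matrices are lower triangular, so their product is lower triangular. It therefore suffices to check the $(i,j)$ entry for $i \geq j$.

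For $i \ge j$, the $(i,j)$ entry of $\mathcal{B}_n(x)\mathcal{M}_n(y)$ is
\begin{equation*}
\sum_{k=j}^{i}\binom{i-1}{k-1}_M B_{i-k,M}(x)\binom{k-1}{j-1}_M y^{k-j}.
\end{equation*}
First, I will apply \eqref{eq: MF1} with $1\le j\le k\le i$ to replace $\binom{i-1}{k-1}_M\binom{k-1}{j-1}_M$ by $\binom{i-1}{j-1}_M\binom{i-j}{k-j}_M$. I then pull the factor $\binom{i-1}{j-1}_M$ outside the sum. Next, I reindex with $l = i-k$, so that $l$ runs from $0$ to $i-j$, and write $N=i-j$. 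Since $\binom{N}{N-l}_M=\binom{N}{l}_M$, the remaining sum becomes
\begin{equation*}
\sum_{l=0}^{N}\binom{N}{l}_M B_{l,M}(x)\,y^{N-l}.
\end{equation*}
By Theorem \ref{TH: BE} this equals $B_{N,M}(x+_My)$. Hence the entry is $\binom{i-1}{j-1}_M B_{i-j,M}(x+_My)$, which is exactly the $(i,j)$ entry of $\mathcal{B}_n(x+_My)$.

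The Euler case is word for word the same, with $E_{l,M}$ in place of $B_{l,M}$ and the second identity of Theorem \ref{TH: BE} in place of the first.

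The only delicate point is the index bookkeeping. The factor $y^{k-j}$ has to turn into $y^{N-l}$ with $N-l = k-j$, and the symmetry $\binom{N}{l}_M=\binom{N}{N-l}_M$ must be used to match the form of Theorem \ref{TH: BE}.

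One caveat concerns the proof of Theorem \ref{TH: BE} as written, which uses the Euler generating function in its Bernoulli part. I will not rely on that proof. Instead I will cite the statement, which follows from $e^{xt}_Me^{yt}_M=e^{(x+_My)t}_M$ and the Cauchy product rule for $M$-exponential series.
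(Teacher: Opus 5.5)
Your proof is correct and follows the paper's route: an entrywise comparison of lower-triangular matrices using \eqref{eq: MF1}, a reindexing, and the addition formula of Theorem~\ref{TH: BE}. The one difference is the order of the factors. The paper's proof actually expands $\mathcal{M}_n(y)\mathcal{B}_n(x)$, which is the reverse of the stated order, and it never remarks that the two products coincide. You multiply in the stated order $\mathcal{B}_n(x)\mathcal{M}_n(y)$, and your use of the symmetry $\binom{N}{l}_M=\binom{N}{N-l}_M$ supplies exactly the commutativity the paper leaves implicit.
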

\begin{proof}
We shall use Theorem \ref{TH: BE} to prove this theorem. 
If $i \geq j,$ we see that
\begin{align*}
\left(\mathcal{B}_n(x+_My)\right)_{ij} &= \binom{i-1}{j-1}_MB_{M,i-j}(x+_My)\\
&= \binom{i-1}{j-1}_M\sum_{k=0}^{i-j}\binom{i-j}{k}B_{k,M}(x)y^{i-j-k}.\\
\end{align*}
Further, for $i \geq j$
\begin{align*}
\left(\mathcal{M}_n(y)\mathcal{B}_n(x)\right)_{ij}&= \sum_{k=j}^{i}a_{ik}b_{kj}\\
&= \sum_{k=j}^{i}\binom{i-1}{k-1}_M\binom{k-1}{j-1}_My^{i-k}B_{k-j,M}(x)\\
&= \binom{i-1}{j-1}_M\sum_{k=j}^{i}\binom{i-j}{k-j}_My^{i-k}B_{k-j,M}(x)\\
&= \binom{i-1}{j-1}_M\sum_{k=0}^{i-j}\binom{i-j}{k}_MB_{k,M}(x)y^{i-j-k}.\\
\end{align*}
Thus, $\left(\mathcal{B}_n(x+_My)\right)_{ij} = \left(\mathcal{M}_n(y)\mathcal{B}_n(x)\right)_{ij}$ for $i \geq j.$ Now, if $i < j$
then we observe that $\left(\mathcal{B}_n(x+_My)\right)_{ij} = 0 = \left(\mathcal{M}_n(y)\mathcal{B}_n(x)\right)_{ij}.$
The result concerning the Mersenne-Euler matrix follows the same method.
\end{proof}
\begin{corollary}
The Mersenne-Bernoulli and Mersenne-Euler matrices satisfy 
\[\mathcal{B}_n(x)^{-1} = \mathcal{M}_n(x)^{-1}\mathcal{B}_n^{-1} \text{~and~} \mathcal{E}_n(x)^{-1} = \mathcal{M}_n(x)^{-1}\mathcal{E}_n^{-1}.\]
\end{corollary}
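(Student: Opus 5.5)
The corollary should follow from the product formula in the preceding theorem by specialising $x$ and then inverting. I will set the first argument equal to $0$ and the second equal to $x$, so that $\mathcal{B}_n(0+_Mx) = \mathcal{B}_n(0)\mathcal{M}_n(x) = \mathcal{B}_n\mathcal{M}_n(x)$. This requires knowing that $0+_Mx = x$ in the sense the paper uses, that is, $B_{k,M}(0+_Mx) = B_{k,M}(x)$. By the $M$-binomial theorem $(0+_Mx)^m = \sum_{k}\binom{m}{k}_M0^kx^{m-k} = x^m$, and Theorem \ref{TH: BE} with first argument $0$ gives $B_{k,M}(0+_Mx)=\sum_{j}\binom{k}{j}_MB_{j,M}x^{k-j}$. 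That sum equals $B_{k,M}(x)$ by Theorem \ref{th: 3_1}. So we get $\mathcal{B}_n(x) = \mathcal{B}_n\mathcal{M}_n(x)$ exactly, and the same argument gives $\mathcal{E}_n(x) = \mathcal{E}_n\mathcal{M}_n(x)$.

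The one point needing care is the order of the factors. The displayed statement of the preceding theorem reads $\mathcal{B}_n(x)\mathcal{M}_n(y)$, but its proof actually computes $\mathcal{M}_n(y)\mathcal{B}_n(x)$. To avoid depending on that, I would check directly that both products $\mathcal{B}_n\mathcal{M}_n(x)$ and $\mathcal{M}_n(x)\mathcal{B}_n$ have $(i,j)$ entry
\[\binom{i-1}{j-1}_M\sum_{k=0}^{i-j}\binom{i-j}{k}_MB_{k,M}x^{i-j-k}.\]
Both are lower triangular products, and \eqref{eq: MF1} turns each entry into this same $M$-binomial convolution. The only difference between the two computations is the direction of the reindexing, $k\mapsto i-j-k$, which the symmetry $\binom{i-j}{k}_M = \binom{i-j}{i-j-k}_M$ absorbs. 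Hence $\mathcal{B}_n$ and $\mathcal{M}_n(x)$ commute, and likewise $\mathcal{E}_n$ and $\mathcal{M}_n(x)$ commute.

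Invertibility is immediate. $\mathcal{M}_n(x)$ is unit lower triangular with the explicit inverse quoted from \cite{KT}. $\mathcal{B}_n$ has inverse $\mathcal{Q}_n$, and $\mathcal{E}_n$ has inverse $\frac{1}{2}(\mathcal{M}_n+I_n)$, both from the preceding theorems.

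Writing $\mathcal{B}_n(x) = \mathcal{B}_n\mathcal{M}_n(x)$ and inverting gives $\mathcal{B}_n(x)^{-1} = \mathcal{M}_n(x)^{-1}\mathcal{B}_n^{-1}$, which is the claimed order. The Euler case is identical. I expect the main obstacle to be only the bookkeeping of the factor order and the justification of $0+_Mx=x$ at the level of the polynomials; the commutation check above settles the first regardless of which order the product formula is read in.
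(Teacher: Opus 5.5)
Your proposal is correct and follows the same route as the paper: set the first argument to $0$ in the product formula to get $\mathcal{B}_n(x)=\mathcal{B}_n\mathcal{M}_n(x)$ and $\mathcal{E}_n(x)=\mathcal{E}_n\mathcal{M}_n(x)$, then invert. Your two extra checks are both valid and fill in what the paper passes over: that $B_{k,M}(0+_Mx)=B_{k,M}(x)$ via the explicit formula in terms of the numbers $B_{k,M}$, and that $\mathcal{B}_n$ and $\mathcal{E}_n$ commute with $\mathcal{M}_n(x)$, which settles the mismatch between the factor order stated in the product formula and the order its proof actually computes.
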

\begin{proof}
Replacing $x$ by $y$ and taking $y = 0$ in the previous theorem prove this result.
\end{proof}
\noindent The Mersenne-Bernoulli matrix of order $3$ and its inverse can be calculated to be
\[\mathcal{B}_3(x) = 
\begin{bmatrix}
1 & 0 & 0\\
x-\frac{1}{3} & 1 & 0\\
x^2 - x + \frac{4}{21} & 3x - 1 & 1\\
\end{bmatrix} \text{~and~}
\mathcal{B}_3(x)^{-1} =
\begin{bmatrix}
1 & 0 & 0 \\
-x+\frac{1}{3} & 1 & 0 \\
2x^2-x+\frac{1}{7} & -3x+1 & 1
\end{bmatrix}.\]
Similarly, the Mersenne-Euler matrix of order $3$ and its inverse are
\[\mathcal{E}_3(x) =
\begin{bmatrix}
1 & 0 & 0 \\
x-\frac{1}{2} & 1 & 0 \\
x^2-\frac{3}{2}x+\frac{1}{4} & 3x-\frac{3}{2} & 1 
\end{bmatrix}
\text{~and~}
\mathcal{E}_3(x)^{-1} =
\begin{bmatrix}
1 & 0 & 0\\
-x+\frac{1}{2} & 1 & 0\\
2x^2-\frac{3}{2}x+\frac{1}{2} & -3x+\frac{3}{2} & 1
\end{bmatrix}.
\]
\section{Conclusion}
In this work, we have introduced and systematically investigated the Mersenne-Bernoulli and Mersenne-Euler polynomials, establishing their fundamental properties, generating functions, and explicit identities. Our results extend the theory of special polynomials connected to Mersenne numbers and provide a new avenue for research in number theory and combinatorics. A significant direction for further study is to explore these polynomials from a probabilistic perspective. Following the modern approach demonstrated for degenerate polynomials by Kim and Kim \cite{KK1,KK2}, a key objective would be to construct associated random variables whose moments are given by our polynomials. This probabilistic interpretation would not only offer a unifying framework but could also facilitate the discovery of new results such as Spivey-type recurrence relations similar to those found for degenerate Bell and Dowling polynomials \cite{KK3,KK4}. Such an investigation would undoubtedly deepen the combinatorial and probabilistic significance of the structures introduced here.

\end{document}